\documentclass{amsart}


\usepackage[utf8]{inputenc}

\usepackage{amsmath}
\usepackage{amssymb}
\usepackage{verbatim}
\usepackage{graphicx}

\usepackage{tikz-cd}
\usepackage{geometry}
\usepackage{esint}
\usepackage[colorlinks=true,urlcolor=blue,citecolor=red,linkcolor=blue,linktocpage,pdfpagelabels,bookmarksnumbered,bookmarksopen]{hyperref}
\numberwithin{equation}{section}

\usepackage[hyperpageref]{backref}

\subjclass[2010]{46E35, 39B72, 35R11}

\keywords{Fractional Sobolev spaces, Hardy inequality, fractional $p-$Laplacian,  Cheeger inequality}
\date{\today}
\newtheorem{lemma}{Lemma}[section]
\newtheorem{theorem}[lemma]{Theorem}
\newtheorem{proposition}[lemma]{Proposition}
\newtheorem{corollary}[lemma]{Corollary}
\theoremstyle{definition}
\newtheorem{definition}[lemma]{Definition}
\newtheorem{remark}[lemma]{Remark}
\newtheorem*{ack}{Acknowledgments}

\title{On fractional Hardy-type inequalities in general open sets }


\author[Cinti]{Eleonora Cinti}

\address[E. Cinti]{Dipartimento di Matematica\newline\indent 
	Alma Mater Studiorum Universit\`a di Bologna
	\newline\indent
	piazza di Porta San Donato, 5\newline\indent
	40126 Bologna - Italy}
\email{eleonora.cinti5@unibo.it}

\author[Prinari]{Francesca Prinari}

\address[F. Prinari]{Dipartimento di Scienze Agrarie, Alimentari e Agro-ambientali
\newline\indent 
Universit\`a di Pisa
\newline\indent
Via del Borghetto 80, 56124 Pisa, Italy}
\email{francesca.prinari@unipi.it}

\begin{document}

\begin{abstract}  We show that,  when  $sp>N$,   the sharp Hardy constant  $\mathfrak{h}_{s,p}$ of the punctured space $\mathbb R^N\setminus\{0\}$ in  the  Sobolev-Slobodecki\u{\i} space provides 
  an optimal lower bound for the Hardy constant $\mathfrak{h}_{s,p}(\Omega)$ of an open  $\Omega\subsetneq \mathbb R^N$.
 The proof exploits the characterization of Hardy’s inequality in the fractional setting  in
terms of positive local weak supersolutions of the relevant Euler-Lagrange equation and relies  on the construction of suitable supersolutions by means of the distance function from the boundary of $\Omega$. 
Moreover, we compute the limit  of   $\mathfrak{h}_{s,p}$  as $s\nearrow 1$,   as well as the limit when   $p \nearrow \infty$. Finally, we apply our results to establish a lower bound for the non-local eigenvalue $\lambda_{s,p}(\Omega)$ in terms of $\mathfrak{h}_{s,p}$ when $sp>N$, which, in turn, gives an improved Cheeger inequality whose constant does not vanish as $p\nearrow \infty$. \end{abstract}

\maketitle

\begin{center}
\begin{minipage}{10cm}
\small
\tableofcontents
\end{minipage}
\end{center}

\section{Introduction}
This paper deals with Hardy-type inequalities in fractional Sobolev spaces, with special interest to optimal lower bounds on their sharp contants.

We recall some well known facts on Hardy's inequalities in the classical (local) setting.
For an open subset $\Omega$ of $\mathbb R^N$, let us define the distance function to the boundary as
$$
d_\Omega(x):=\min_{y\in \partial \Omega}|x-y|,\quad \mbox{for all } x\in \Omega.
$$
A classical result in the theory of Sobolev spaces states that, under suitable assumptions on the set $\Omega$, there exists a positive constant $C$ such that 
\begin{equation}\label{classical-Hardy}
	C\int_\Omega \frac{|u|^p}{d_\Omega^p}\, dx\leq \int_{\Omega} |\nabla u|^p\,dx, \qquad \text{for all } u\in C_0^\infty(\Omega).
 	\end{equation}
 In the huge existing literature concerning the Hardy inequality, some results establish  necessary and sufficient conditions on the open set $\Omega$ ensuring  the validity of  \eqref{classical-Hardy} (see, for example, the references in \cite{KK}) while other papers are devoted to  the  interesting related question  of  determining   the optimal constant $C$ in \eqref{classical-Hardy}, which can be defined in a variational way as
\[
\mathfrak{h}_{p}(\Omega)=\inf_{u\in C^\infty_0(\Omega)}\left\{\int_\Omega |\nabla u|^p :\, \int_{\Omega} \frac{|u|^p}{d_{\Omega}^{p}} \, dx=1\right\}.
\]
This has been achieved in some particular cases, e.g. :
\begin{itemize}
	\item if $\Omega=\mathbb R^N\setminus\{0\}$,  $1<p<\infty$,  $p\not=N$, then $\mathfrak{h}_{p}(\Omega)=\left(\frac{|N-p|}{p}\right)^p$ (see \cite{PK} and the references therein); 
	\item if $\Omega$ is convex, $1<p<\infty$, then $\mathfrak{h}_{p}(\Omega)=\left(\frac{p-1}{p}\right)^p$ (for a proof, see  \cite[Theorem 11]{MMP}). \end{itemize}

In the particular case  $p>N$, Lewis in \cite{Lewis} and Wannebo in \cite{Wa} show that the Hardy inequality \eqref{classical-Hardy} holds on every open set $\Omega\subset \mathbb R^N$.  Later, an alternative proof of this  result  has been given  in  \cite{H} by means of a ''pointwise Hardy inequality"  and maximal function techniques. However, all these papers do not  provide any explicit (lower) bound for $\mathfrak{h}_{p}(\Omega)$. The latter question is  studied in  \cite{Av, GPP},  where it is proved that,  when $p>N$, the optimal Hardy constant of the punctured space  $\mathbb R^N\setminus\{0\}$ provides an optimal lower bound for $\mathfrak{h}_{p}(\Omega)$, i.e. for every open set $\Omega\subset  \mathbb{R}^N$ it holds:
\begin{equation}\label{s=1} \mathfrak{h}_{p}(\Omega)\geq \mathfrak{h}_{p}(\mathbb{R}^N\setminus\{0\})=\left(\frac{p-N}{p}\right)^p.\end{equation}

Recently, much interest has been devoted to the study of fractional nonlocal operators, fractional Sobolev spaces and related functional inequalities. A natural question in this context is whether a fractional analogue of \eqref{classical-Hardy} holds true and whether one can determine the sharp constant, at least in some particular cases.

 In order to state our main result, let us start by introducing our notation.
For $1\le p<\infty$, $0<s\le 1$ and $\Omega\subseteq  \mathbb{R}^N$, we define
\[
W^{s,p}(\Omega)=\Big\{\varphi\in L^p(\Omega)\, :\, [\varphi]_{W^{s,p}(\Omega)}<+\infty\Big\},
\]
where
\[
[\varphi]_{W^{s,p}(\Omega)}=\left\{\begin{array}{ll}
\displaystyle \left(\iint_{\Omega\times \Omega} \frac{|\varphi(x)-\varphi(y)|^p}{|x-y|^{N+sp}}\,dx\,dy\right)^\frac{1}{p},& \mbox{ if } 0<s<1,\\
&\\
\displaystyle\left(\int_{\Omega} |\nabla \varphi|^p\,dx\right)^\frac{1}{p},& \mbox{ if } s=1. 
\end{array}
\right.
\]
When $1<p<\infty$, this is a reflexive space, when endowed with the norm
\[
\|\varphi\|_{W^{s,p}(\Omega)}=\|\varphi\|_{L^p(\Omega)}+[\varphi]_{W^{s,p}(\Omega)},\qquad \mbox{ for every } \varphi\in W^{s,p}(\Omega).
\]
We also indicate by $\widetilde{W}^{s,p}_0(\Omega)$ the closure of $C^\infty_0(\Omega)$ in $W^{s,p}(\mathbb{R}^N)$. 
\par
The analogue of the Hardy inequality \eqref{classical-Hardy} in this context reads as follows:
\begin{equation}\label{fractional-Hardy}
	C\int_\Omega \frac{|u|^p}{d_\Omega^{sp}}\, dx\leq  [u]_{W^{s,p}(\mathbb R^N)}^p, \quad \mbox{for all } u \in C_0^{\infty}(\Omega).
\end{equation}
For an open set  $\Omega\subsetneq \mathbb{R}^N$,  we introduce  its sharp fractional $(s,p)$-Hardy constant defined as  
\[
\mathfrak{h}_{s,p}(\Omega)=\inf_{u\in C^\infty_0(\Omega)}\left\{[u]^p_{W^{s,p}(\mathbb{R}^N)} :\, \int_{\Omega} \frac{|u|^p}{d_{\Omega}^{sp}} \, dx=1\right\}.
\]
We explicitly note that  $\mathfrak{h}_{1,p}(\Omega)=\mathfrak{h}_{p}(\Omega)$.
 Observe that, by definition of  $\widetilde{W}^{s,p}_0(\Omega)$, we have
	\begin{equation}\label{inf-W}
	\mathfrak{h}_{s,p}(\Omega)=\inf_{u\in \widetilde{W}^{s,p}_0(\Omega)}\left\{[u]^p_{W^{s,p}(\mathbb{R}^N)} :\, \int_{\Omega} \frac{|u|^p}{d_{\Omega}^{sp}} \, dx=1\right\},
	\end{equation}
by a standard density argument.	
\par	
	A first result in the determination of the sharp constant in this fractional setting was established by Frank and Seiringer \cite[Theorem 1.1]{FS}, who proved that, if $N\ge 1$, $0<s<1$ and $1\le p<\infty$ are such that $sp\not=N$, then
	\[
	 \mathfrak{h}_{s,p}:=\mathfrak{h}_{s,p}(\mathbb{R}^N\setminus\{0\})=2\,\int_0^1 r^{sp-1}\,\left|1-r^\frac{N-sp}{p}\right|^p\,\Phi_{N,s,p}(r)\,dr>0,
\]
	 where, for every $0<r<1$, the quantity $\Phi_{N,s,p}(r)$ is given by
	 \begin{equation*}
	 	\Phi_{N,s,p}(r)=|\mathbb{S}^{N-2}|\,\int_{-1}^1 \frac{(1-t^2)^\frac{N-3}{2}}{(1-2\,t\,r+r^2)^\frac{N+sp}{2}}\,dt,\qquad \mbox{ for }N\ge 2,
	 \end{equation*}
	 and
	 \[
	 \Phi_{1,s,p}(r)=\frac{1}{(1-r)^{1+sp}}+\frac{1}{(1+r)^{1+sp}}.
	 \]
The case of convex sets has been considered recently in \cite{BBZ}, where, in Theorems 6.3 and 6.6,  it has been proved that, if $\Omega$ is convex, the optimal constant $\mathfrak{h}_{s,p}(\Omega)$ coincides with  the one of the half-space $\mathbb H^N_+:=\mathbb R^{N-1}\times (0,+\infty)$ (whose explicit value is given in  formulas (1.9)-(1.10) in \cite{BBZ})  in the following situations:
	\begin{itemize}
		\item for $1<p<\infty$ and  $1/p \le s <1$;
		\item for $p=2$ and $0<s<1$.
	\end{itemize}

We note that  the paper \cite{BBZ} extends some previous results contained in \cite{BD,FMT} for the case $p=2$. More precisely in \cite[Theorem 1.1]{BD}, the explicit value of the optimal constant for the half-space has been computed for $p=2$ and any $0<s<1$, while in \cite[Theorem 5]{FMT}, it has been proved that if $\Omega$ is convex, then $\mathfrak{h}_{s,2}(\Omega)=\mathfrak{h}_{s,2}(\mathbb H^N_+)$ for any $1/2\le s<1$.

When $sp>N$,  the recent paper \cite{S} shows  that the fractional Hardy inequality  \eqref{fractional-Hardy} holds on every open set  $\Omega\subset\mathbb R^N$,  by adapting the technique in \cite{H} to the non-local setting. However, such an approach does not give  any  lower estimate on the fractional Hardy constant $\mathfrak{h}_{s,p}(\Omega)$. 

With the aim to provide an optimal lower bound on $\mathfrak{h}_{s,p}(\Omega)$ when    $\Omega \subset \mathbb R^N$ is a general open set and $sp>N$,   in this paper we give a different proof of  the Hardy inequality  \eqref{fractional-Hardy} which comes out with a lower sharp  estimate on $\mathfrak{h}_{s,p}(\Omega)$. In particular,  our main result  extends  inequality \eqref{s=1}  to the fractional case  $sp>N$. 

\begin{theorem}\label{main} 
Let $N\ge 1$, $0<s<1$ and $1<p<\infty$ be such that $sp>N$. For every open set $\Omega\subsetneq\mathbb{R}^N$ we have
\begin{equation}
\label{mainstima}
\mathfrak{h}_{s,p}(\Omega)\geq \mathfrak{h}_{s,p},\qquad \text{where}\ \mathfrak{h}_{s,p}:=\mathfrak{h}_{s,p}(\mathbb{R}^N\setminus\{0\}).
\end{equation}
	 \end{theorem}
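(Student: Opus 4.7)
My plan is to apply the nonlocal analogue of the Agmon--Allegretto--Piepenbrink principle, namely the characterization of the fractional Hardy constant in terms of positive local weak supersolutions: $\mathfrak{h}_{s,p}(\Omega)\geq c$ as soon as there exists $u>0$ in $\Omega$ satisfying
$$(-\Delta_p)^s u \,\geq\, c\,\frac{u^{p-1}}{d_\Omega^{sp}} \qquad \text{in } \Omega,$$
in the local weak sense. The whole proof then reduces to exhibiting such a supersolution with $c=\mathfrak{h}_{s,p}$, built from the distance function $d_\Omega$.

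The natural candidate is suggested by the Frank--Seiringer extremal on $\mathbb{R}^N\setminus\{0\}$: when $sp>N$, setting $\alpha:=(sp-N)/p>0$, the radial function $y\mapsto|y|^{\alpha}$ is the sharp ``Hardy profile'' and satisfies, pointwise for $x\neq 0$, the Euler--Lagrange identity
$$(-\Delta_p)^s(|\cdot|^{\alpha})(x) \,=\, \mathfrak{h}_{s,p}\,|x|^{\alpha(p-1)-sp}.$$
Mirroring this choice, I would set $U(x):=d_\Omega(x)^{\alpha}$ in $\Omega$, extended by zero to $\mathbb{R}^N$. To verify the supersolution inequality at a fixed $x\in\Omega$, choose a nearest boundary point $x_0\in\partial\Omega$, so that $|x-x_0|=d_\Omega(x)$, and define $V(y):=|y-x_0|^{\alpha}$. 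By translation invariance and the identity above,
$$(-\Delta_p)^s V(x) \,=\, \mathfrak{h}_{s,p}\,\frac{V(x)^{p-1}}{|x-x_0|^{sp}} \,=\, \mathfrak{h}_{s,p}\,\frac{U(x)^{p-1}}{d_\Omega(x)^{sp}}.$$

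The comparison is then immediate: since $x_0\in\partial\Omega$, one has $d_\Omega(y)\leq|y-x_0|$ for every $y\in\Omega$, so that $U\leq V$ on $\Omega$ because $\alpha>0$; outside $\Omega$, $U\equiv 0\leq V$ trivially; and $U(x)=V(x)$ by the choice of $x_0$. The monotonicity of $t\mapsto|t|^{p-2}t$ yields, for every $y\in\mathbb{R}^N$,
$$|U(x)-U(y)|^{p-2}(U(x)-U(y)) \,\geq\, |V(x)-V(y)|^{p-2}(V(x)-V(y)),$$
and integration against the positive kernel $|x-y|^{-N-sp}$ gives $(-\Delta_p)^s U(x)\geq(-\Delta_p)^s V(x)$. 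Combining the two displays, $U$ is the desired supersolution with constant $\mathfrak{h}_{s,p}$, and the characterization delivers $\mathfrak{h}_{s,p}(\Omega)\geq\mathfrak{h}_{s,p}$.

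The main obstacle is not the comparison -- which is clean and uses only $\alpha>0$ and monotonicity of $t\mapsto|t|^{p-2}t$ -- but the technical setup around it. Two points require care: (i) establishing the pointwise Euler--Lagrange identity for $|y|^{\alpha}$ with the \emph{sharp} constant $\mathfrak{h}_{s,p}$: this is implicit in \cite{FS} but must be made explicit by passing to polar coordinates centered at $x_0$, reducing the principal value to the one-dimensional kernel $\Phi_{N,s,p}$, and checking convergence of the P.V.\ integral along the profile $r^{\alpha}$; and (ii) promoting the pointwise inequality to a \emph{weak} local supersolution, which requires $U\in W^{s,p}_{\mathrm{loc}}(\Omega)$ and controlled fractional tails for $U$ extended by zero. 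The hypothesis $sp>N$ (hence $\alpha>0$) is exactly what makes $U$ continuous, locally bounded, and vanishing on $\partial\Omega$, which both legitimises the pointwise computation and tames the tails of the nonlocal integral when tested against $C_0^\infty(\Omega)$.
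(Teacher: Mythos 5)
Your high-level strategy matches the paper's: reduce the lower bound to the existence of a positive local weak supersolution of $(-\Delta_p)^s u = \mathfrak{h}_{s,p}\,u^{p-1}/d_\Omega^{sp}$ in $\Omega$ (formula \eqref{dual-super}), and build that supersolution from the distance function by comparing $d_\Omega(y)$ with $|y-x_0|$ for boundary points $x_0$. The execution, however, is genuinely different, and the difference is where the gap lies. You run the comparison \emph{pointwise}: fix $x$, touch $U=d_\Omega^\alpha$ from above by $V_x(y)=|y-x_0|^\alpha$ at $y=x$, and apply monotonicity of $J_p$ under the integral sign to deduce $(-\Delta_p)^s U(x)\geq(-\Delta_p)^s V_x(x)$. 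The paper instead never evaluates the operator pointwise: it invokes \cite[Theorem 1.1]{DPQ} to get that each $|y-x_i|^\beta$ is a \emph{local weak} solution, proves (Proposition~\ref{propkey}, following the test-function construction of \cite[Theorem 1.1]{KKP}) that the pointwise minimum of two such weak supersolutions is again a weak supersolution, extends by induction to finitely many punctures, and passes to the limit over a countable dense subset of $\partial\Omega$ using dominated convergence and Fatou (Theorem~\ref{theorem:super-beta}).

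The gap you flag in item (ii) is real and is not a technicality that can be waved off. First, $d_\Omega$ is only Lipschitz, so $U=d_\Omega^\alpha$ is merely Lipschitz near an interior point $x$; the principal-value integral defining $(-\Delta_p)^s U(x)$ need not converge for a Lipschitz function when $sp\geq p-1$, i.e. $s\geq 1-1/p$, which is always compatible with $sp>N$. Second, even if the pointwise quantity made sense for a.e.\ $x$, the comparison function $V_x$ depends on $x$ through the choice of the nearest boundary point, so there is no single integration-by-parts identity to pass from $\int(-\Delta_p)^s U\,\varphi$ to the double integral in Definition~\ref{def-sol}; the symmetric double-integral (weak) form is not an integral of a pointwise quantity against $\varphi$ unless $U$ is regular enough for Fubini, and it is not. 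This is precisely why the paper goes through the KKP machinery: the statement ``$\min\{u_1,u_2\}$ is a weak supersolution when $u_1,u_2$ are'' encodes your pointwise sliding argument at the level of test functions, via the interpolating cut-offs $\theta_\varepsilon$ in \eqref{scomp}--\eqref{superh}, and the subsequent approximation $E_n\searrow\Omega$ with the tail estimates \eqref{g_L1}--\eqref{fuori} handles the nonlocal tails that your sketch only gestures at. So: the supersolution you want to produce is the right one, and your heuristic for why it works is the right intuition, but as written the proof stops short exactly where the paper's two-page argument begins.
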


The proof of this result is based on the so-called \textit{supersolution method}, which was well known in the classical local case (see e.g. \cite{Ancona, KK, DP})
and was extensively studied in the fractional setting in \cite{BBSZ, BBZ}. Such a method allows to give an equivalent ``dual" definition of $\mathfrak h_{s,p}(\Omega)$, which relies on the existence of positive supersolutions to the nonlinear fractional equation:
\begin{equation}\label{EL-super}
	(-\Delta_p)^s u =\lambda \frac{|u|^{p-2}u}{d_\Omega^{sp}} \quad \mbox{in } \Omega,
	\end{equation}
where $(-\Delta_p)^s$ denotes the fractional $p$-Laplacian, whose precise definition will be given later on in Section \ref{sec:2}. We recall that, in \cite[Theorem 1.1]{BBSZ}, it is proved that 
\begin{equation}\label{dual-super}
	\mathfrak h_{s,p}(\Omega)=\sup\{\lambda \ge 0\,:\, \mbox{equation} \ (\ref{EL-super}) \,\, \hbox{admits a positive local weak supersolution} \}.
	\end{equation}
For the definition of local weak super/subsolution we refer to Definition \ref{def-sol} below.

Such result, as well explained in \cite{BBSZ}, is based on the equivalence between the strict positivity of $\mathfrak h_{s,p}(\Omega)$ and the existence of a positive (local weak) supersolution to \eqref{EL-super} for some $\lambda$. For more details on the supersolution method see \cite{BBSZ} and reference therein.

Thanks to the formula \eqref{dual-super} above, in order to prove Theorem \ref{main}, it is enough to find a positive supersolution to \eqref{EL-super} for $\lambda=\mathfrak h_{s,p}(\mathbb R^N\setminus \{0\})$. This is the content of Theorem \ref{theorem:super-beta} (and, more precisely, of Corollary \ref{theorem:main-super} below), where we give an explicit supersolution  to  \eqref{EL-super}, in terms of powers of the distance function. 
\medskip

In the second part of the paper we study the asymptotics, when $s\nearrow 1$  of $\mathfrak h_{s,p}$, as well as its limit when   $p \nearrow \infty$.
The strategy adopted  in \cite{S}  does not allow to   perform a quantitative study of the optimal Hardy constant $\mathfrak{h}_{s,p}(\Omega)$ and of  its behaviour  as  $s\nearrow 1$ and $p\nearrow \infty$.  On the contrary, the application of the supersolution method  permits us to prove,  that, when $p>N$, it holds
\[
\lim_{s \nearrow 1}(1-s)\mathfrak h_{s,p}(\mathbb R^N\setminus\{0\})  = K_{p,N} \left(\frac{p-N}{p}\right)^p=K_{p,N}  \mathfrak h_{p}(\mathbb R^N\setminus\{0\}),
\]
where $K_{p,N}$ is an explicit constant depending only on $p$ and $N$ (see Theorem \ref{s-to-1}). This will follow by combining a \textit{limsup}-inequality (which is valid for any open subset of $\mathbb R^N$) and a \textit{liminf}-inequality (proved  for $\Omega=\mathbb R^N\setminus \{0\}$), which are established in Lemma \ref{limsup}-\ref{liminf}, respectively. We emphasize that, while for proving the $\limsup$-inequality, it is sufficient to use the variational definition \eqref{inf-W} of $\mathfrak h_{s,p}(\Omega)$,  for establishing the liminf-inequality the dual formulation \eqref{dual-super} is better situated (since there, the Hardy constant is written as a \textit{supremum} rather than an \textit{infimum}).

Moreover, by exploiting the lower bound  \eqref{mainstima},  in  Theorem \ref{pinfty}  we show that, for every $0<s<1$, it holds
\begin{equation}
	\label{hardyinftyintro}
	\lim_{p\to \infty} (\mathfrak{h}_{s,p}(\Omega))^{\frac{1}{p}}=1,
\end{equation}
 generalising  the result given in \cite[Theorem 4.4]{BPZ} when $s=1$.
Again, this will follow by combining a \textit{liminf} and a \textit{limsup} inequality, both valid, now, for any open set $\Omega$.

In the last section of this paper,  we apply  Theorem \ref{main} to obtain the following  Cheeger inequality 
\begin{equation}\label{stimacheegerintro}\lambda_{s,p}(\Omega)\geq  \mathfrak{h}_{s,p} \left(\frac{h_1(\Omega)}{N}\right)^{sp},\end{equation}
for $sp>N$ and $\Omega\subsetneq \mathbb R^N$ open, see  Theorem \ref{cheeger}. 
Here  $h_{1}(\Omega)$ is the  {\it Cheeger constant}  of $\Omega$ defined by   \begin{equation}\label{cheegcon}
h_{1}(\Omega)=\inf\bigg\{\frac{P(E)}{|E|}\ :\ E\Subset\Omega\ \text{smooth},\, |E|>0\bigg\},
\end{equation}
and 
 $ \lambda_{s,p}(\Omega)$  is defined by the following sharp fractional Poincar\'e inequality 
\begin{equation}\label{speigenv}
\lambda_{s,p}(\Omega)=\inf_{u\in C^\infty_0(\Omega)}\left\{[u]^p_{W^{s,p}(\mathbb{R}^N)} :\, \int_{\Omega} |u|^p \, dx=1\right\}.
\end{equation}
  We explicitly note that,  in the case $s=1$,  when  $p>N$,   combining \eqref{stimacheegerintro}   with  \eqref{s=1},  we get  an improvement of the classical Cheeger inequality  with a constant which does not vanish as $p\to \infty$ (for further details, see Remark \ref{cheegerimpr}). Finally, thanks to \eqref{hardyinftyintro}, we   study the  asymptotic behaviour of the family $\left(\lambda_{s,p}(\Omega)\right)^{1/p}$ as $p\to \infty$ (see Corollary \ref{convergenzaautov}), getting a sharp estimate in the limit case $p=\infty$ (see  \eqref{cheegerbelow}).

\begin{ack} We wish to thank Lorenzo Brasco  for some useful discussions. 
The authors are members of the {\it Gruppo Nazionale per l'Analisi Matematica, la Probabilit\`a
e le loro Applicazioni} (GNAMPA) of the Istituto Nazionale di Alta Matematica (INdAM). E.C. is partially supported by the PRIN project 2022R537CS \emph{$NO^3$ - Nodal Optimization, NOnlinear elliptic equations, NOnlocal geometric problems, with a focus on regularity. } The work of F.P. has been  financially supported by the PRA-2022 project \emph{Geometric evolution problems and PDEs on variable domains}, University of Pisa.  
\end{ack}

\section{An explicit supersolution and the proof of Theorem \ref{main}}
\label{sec:2}
For every $1<p<\infty$, we indicate by $J_p:\mathbb{R}\to \mathbb{R}$ the monotone increasing function defined by 
\[J_p(t)=|t|^{p-2}\,t,\qquad \mbox{ for every } t\in\mathbb{R}.
\]
For $x_0\in\mathbb{R}^N$ and $R>0$, we will denote by $B_R(x_0)$ the $N-$dimensional open ball centered at $x_0$, with radius $R$. We will use the standard notation $\omega_N$ for the $N-$dimensional Lebesgue measure of $B_1(0)$.  For an open set $\Omega\subsetneq \mathbb{R}^N$, we denote by 

\[ 
d_{\Omega}(x):=\min_{y \in \partial\Omega} |x-y|, \qquad \mbox{ for every } x \in \Omega,
\]
the distance function from the boundary. We extend $d_{\Omega}$  by $0$ outside $\Omega$.  Moreover, we denote by $r_{\Omega}$  the inradius of $\Omega$, defined by 
\[
r_\Omega=\|d_\Omega\|_{L^\infty(\Omega)}=\sup \Big\{r>0\, :\, \mbox{there exists }x_0\in \Omega \mbox{ such that } B_r(x_0)\subseteq\Omega\Big\}.
\]

For a pair of open sets $E\subseteq \Omega\subseteq\mathbb{R}^N$, the symbol $E\Subset \Omega$ means that the closure $\overline{E}$ is a compact subset of $\Omega$.

For $0<\alpha<\infty$, we  denote by $L^{\alpha}_{sp}(\mathbb{R}^N)$ the following weighted Lebesgue space

\[L^{\alpha}_{sp}(\mathbb{R}^N)=\left\{  u\in L^{\alpha}_{\rm loc}(\mathbb{R}^N): 
\int_{\mathbb{R}^N} \frac{|u|^{\alpha}}{(1+|x|)^{N+sp}}\,dx<+\infty\right\}.\]

For $1<p<\infty$, $0<s<1$, and $\Omega\subsetneq \mathbb{R}^N$ open, we will  consider the equation 
\begin{equation}\label{eq:deltaps}(-\Delta_p)^s u=\lambda \frac{|u|^{p-2}u}{d_{\Omega}^{sp}}\quad \mbox{in } \Omega,\end{equation}
where $\lambda\geq 0$. 
Here $(-\Delta_p)^s$ is the {\it fractional $p-$Laplacian of order $s$}, defined in its weak form by 
\[
\langle(-\Delta_p)^s u,\varphi\rangle:=\iint_{\mathbb{R}^N\times \mathbb{R}^N} \frac{J_p(u(x)-u(y))\,(\varphi(x)-\varphi(y))}{|x-y|^{N+sp}}\,dx\,dy, \qquad \mbox{ for every } \varphi\in C^\infty_0(\mathbb{R}^N).
\]
\begin{definition}\label{def-sol} We say that $u\in W^{s,p}_{\rm{loc}}(\mathbb{R}^N)\cap L^{p-1}_{sp}(\mathbb{R}^N)$ is a 
\begin{itemize}
\item { \sl local weak supersolution} of \eqref{eq:deltaps} if
\begin{equation*}
\iint_{\mathbb{R}^N\times\mathbb{R}^N} \frac{J_p(u(x)-u(y))\,(\varphi(x)-\varphi(y))}{|x-y|^{N+sp}}\,dx\,dy\geq \lambda \int_{\mathbb{R}^N}  \frac{|u|^{p-2}u}{d_{\Omega}^{sp}}\varphi\,dx\end{equation*}
for every non-negative $ \varphi\in C^\infty_0(\Omega)$;
\vskip.2cm
\item { \sl local weak subsolution} of \eqref{eq:deltaps} if
\begin{equation*}
\iint_{\mathbb{R}^N\times\mathbb{R}^N} \frac{J_p(u(x)-u(y))\,(\varphi(x)-\varphi(y))}{|x-y|^{N+sp}}\,dx\,dy\leq \lambda \int_{\mathbb{R}^N}  \frac{|u|^{p-2}u}{d_{\Omega}^{sp}}\varphi\,dx\end{equation*}
for every non-negative $ \varphi \in  C^\infty_0(\Omega)$.
\item { \sl  local weak solution} of \eqref{eq:deltaps} if it is both a local weak supersolution and a  local weak subsolution.

\end{itemize}
\end{definition}

The aim of this section is to prove the following result:

\begin{theorem}\label{theorem:super-beta}
	Let $sp>N$ and	let $\Omega\subsetneq \mathbb R^N$ be an open set. 
	Then, the function
	$$U_{\Omega,\beta}:=d_{\Omega}^{\beta}, \qquad \beta \in \left(0,\frac{ps-N}{p-1}\right),$$
	is a positive local weak supersolution of
	\begin{equation}\label{super-beta}
		(-\Delta_p)^s U_{\Omega,\beta}=\mathcal{C}(\beta) \frac{U_{\Omega,\beta}^{p-1}}{d_{ \Omega}^{sp}},\qquad \mbox{in } \Omega,
	\end{equation}
where the positive constant $\mathcal{C}(\beta)$ is given by 
\begin{equation}\label{cbeta}
\mathcal{C}(\beta):=4\pi \alpha_N\int_0^1|1-\rho^\beta|^{p-2}(1-\rho^\beta)\left[\rho^{N-1}-\rho^{ps-\beta(p-1)-1}\right]G(\rho^2)\,d\rho ,
\end{equation}
with 
$$
\alpha_N:=\frac{\pi^{\frac{N-3}{2}}}{\Gamma\left(\frac{N-1}{2}\right)}, \qquad  G(t):=B\left(\frac{N-1}{2},\frac{1}{2}\right)F\left(\frac{N+ps}{2},\frac{ps+2}{2};\frac{N}{2};t\right).
$$
Here $\Gamma,\,B,$ and $F$ denote the gamma, the beta, and the hypergeometric functions, respectively\footnote{We refer for example to \cite[Chapter 1]{Leb} and \cite[Chapter 9]{Leb} for the definitions and properties of these functions.}.

\end{theorem}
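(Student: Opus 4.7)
The strategy is to establish the pointwise inequality
\[
(-\Delta_p)^s d_\Omega^\beta(x_0) \ge \mathcal C(\beta)\, d_\Omega(x_0)^{\beta(p-1) - sp}, \qquad \text{for a.e.}\ x_0 \in \Omega,
\]
and then convert it into the weak supersolution statement by Fubini.  Before doing so, one checks that $U_{\Omega,\beta} = d_\Omega^\beta$ belongs to $W^{s,p}_{\mathrm{loc}}(\mathbb R^N)\cap L^{p-1}_{sp}(\mathbb R^N)$ and that the principal-value integral defining $(-\Delta_p)^s U_{\Omega,\beta}(x_0)$ converges for a.e.\ $x_0$; the tail integrability is automatic under the hypothesis $\beta(p-1) < sp - N$, and the local $W^{s,p}$ regularity follows from $d_\Omega$ being $1$-Lipschitz.

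\textbf{Pointwise reduction to a model integral.}  Fix $x_0 \in \Omega$, set $r = d_\Omega(x_0)$, and pick any nearest boundary point $y_0 \in \partial\Omega$, so $|x_0 - y_0| = r$.  The key input is the trivial inequality
\[
d_\Omega(y) \le |y - y_0|, \qquad y \in \mathbb R^N,
\]
which follows directly from the definition of $d_\Omega$ as an infimum over $\partial\Omega$.  Since $\beta > 0$ and $J_p$ is strictly increasing,
\[
J_p\bigl(d_\Omega(x_0)^\beta - d_\Omega(y)^\beta\bigr) \ge J_p\bigl(r^\beta - |y - y_0|^\beta\bigr),
\]
and both sides agree to first order at $y = x_0$ (where $d_\Omega$ is differentiable one has $\nabla d_\Omega(x_0) = (x_0 - y_0)/r$), so the comparison is compatible with the principal-value interpretation.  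After the substitution $y = y_0 + rz$, together with the homogeneity $J_p(r^\beta t) = r^{\beta(p-1)} J_p(t)$ and $|x_0 - y| = r|e - z|$ for $e = (x_0 - y_0)/r$, this reduces the pointwise estimate to
\[
(-\Delta_p)^s U_{\Omega,\beta}(x_0) \ge 2\, r^{\beta(p-1) - sp}\, \mathcal I, \qquad \mathcal I := \mathrm{p.v.}\!\int_{\mathbb R^N} \frac{J_p(1 - |z|^\beta)}{|e - z|^{N+sp}}\, dz,
\]
and by rotational invariance the constant $\mathcal I$ does not depend on $e$.

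\textbf{Evaluating $\mathcal I$ and identifying $\mathcal C(\beta)$.}  Passing to spherical coordinates $z = \rho\theta$, the angular integral over $\mathbb S^{N-1}$ is exactly the Frank--Seiringer kernel $\Phi_{N,s,p}(\rho)$.  Splitting the radial integral over $(0,1)$ and $(1,\infty)$, inverting $\rho \mapsto 1/\rho$ on the outer piece, and exploiting the identities
\[
\Phi_{N,s,p}(1/\rho) = \rho^{N+sp}\, \Phi_{N,s,p}(\rho), \qquad J_p(1 - \rho^{-\beta}) = -\rho^{-\beta(p-1)}\, J_p(1 - \rho^\beta),
\]
consolidates the formula into a single integral on $(0,1)$ with kernel $\rho^{N-1} - \rho^{sp - \beta(p-1) - 1}$.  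An Euler integral representation of ${}_2F_1$, applied after a quadratic change of variable (e.g.\ $t = 2u - 1$), rewrites $\Phi_{N,s,p}(\rho)$ as a hypergeometric function in $\rho^2$, which together with the normalization $|\mathbb S^{N-2}| = 2\pi\alpha_N$ yields $2\mathcal I = \mathcal C(\beta)$ in the stated form.  Positivity of $\mathcal C(\beta)$ then follows because the hypothesis $\beta < (sp - N)/(p-1)$ forces $sp - \beta(p-1) - 1 > N - 1$, so $\rho^{N-1} > \rho^{sp - \beta(p-1) - 1}$ on $(0,1)$, while $J_p(1 - \rho^\beta)$ and $G$ are both strictly positive there.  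The main obstacle is this final identification: the Frank--Seiringer kernel is naturally parametrized by $\rho$ rather than $\rho^2$, so obtaining the hypergeometric form requires a quadratic transformation of ${}_2F_1$ (or an equivalent even/odd decomposition of $(1 - 2\rho t + \rho^2)^{-(N + sp)/2}$) together with careful bookkeeping of the beta and gamma constants.
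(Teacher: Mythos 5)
Your strategy differs genuinely from the paper's, and it contains two gaps that would need serious work to close.

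\textbf{How the paper argues.}
The paper never computes a pointwise fractional $p$-Laplacian of $d_\Omega^\beta$. Instead it observes that $d_\Omega^\beta = \inf_{z\in\partial\Omega}|\cdot-z|^\beta$ and that each $|\cdot-z|^\beta$ is a known local weak solution of the model equation (Del Pezzo--Quaas). It then invokes, in the weak (double-integral) formulation, the Korvenp\"a\"a--Kuusi--Palatucci result that the minimum of two local weak supersolutions is again a local weak supersolution; this is done first for two points (Proposition~\ref{propkey}, where one has to track the zero-order term $U^{p-1}/d^{sp}$ alongside the KKP argument), then for finitely many points (Corollary~\ref{cor-super}), and finally one takes a countable dense subset of $\partial\Omega$ and passes to the limit by monotone/dominated convergence, carefully splitting the double integral into a near-diagonal piece and a tail. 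Everything happens at the level of the symmetric double integral tested against $\varphi\in C^\infty_0(\Omega)$; no principal value is ever formed.

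\textbf{Where your argument has gaps.}
Your pointwise comparison is the right intuition (it is in fact the same comparison $d_\Omega\le|\cdot-y_0|$ that drives the ``infimum'' picture), but turning it into a proof requires two things you do not supply.
\emph{First}, the claim that the two integrands ``agree to first order at $y=x_0$'' and that this makes ``the comparison compatible with the principal-value interpretation'' is not enough. To compare the two principal values you need both of them to exist, and to exist you need to control the \emph{second}-order behaviour of $d_\Omega^\beta$ near $x_0$ so that the odd part cancels and the remainder is absolutely integrable against $|x_0-y|^{-N-sp}$. First-order agreement only removes the worst term. One would have to invoke semiconcavity of $d_\Omega$ and Alexandrov's theorem to get a pointwise second-order expansion at a.e.\ $x_0$, then estimate $|J_p(u(x_0)-u(y))-J_p(V(x_0)-V(y))|$ using $|u-V|=O(|y-x_0|^2)$; this breaks down further when $1<p<2$, where $J_p'$ blows up at the origin and the mean-value bound degenerates. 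None of this is in the proposal.
\emph{Second}, even granting the pointwise inequality $(-\Delta_p)^s U_{\Omega,\beta}(x_0)\ge\mathcal C(\beta)\,d_\Omega(x_0)^{\beta(p-1)-sp}$ a.e., passing to the weak supersolution statement is not ``by Fubini.'' The weak formulation is the symmetric double integral tested against $\varphi$, while the pointwise operator is a principal-value single integral; the integration by parts linking them requires an integrability justification (splitting into near-diagonal and far-field parts, using $U_{\Omega,\beta}\in W^{s,p}_{\rm loc}\cap L^{p-1}_{sp}$ and the compact support of $\varphi$) that is precisely the kind of estimate the paper spends a full page on in its own limit argument, and which does not come for free.

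\textbf{What each approach buys.}
Your route, if the above were filled in, would be more ``local'' in spirit and would avoid the approximation by finite point sets. The paper's route stays entirely within the weak formulation, so it never has to make sense of a principal value for a merely Lipschitz (in fact only $\beta$-H\"older-at-the-boundary) function, and the technical burden is shifted to a monotone-limit argument that is robust and elementary. Your identification of the constant $\mathcal C(\beta)$ by the substitution $\rho\mapsto 1/\rho$ and the symmetry of $\Phi_{N,s,p}$ is correct and is essentially the computation already done in \cite{FS,DPQ}; the paper simply quotes \cite{DPQ} for the model solution and never needs to re-derive the hypergeometric form.
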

\begin{remark}
\label{rem:costanti}
Observe that, for the choice $\beta=(sp-N)/ p$, we have that 
\[
\mathcal{C}\left(\frac{sp-N}{p}\right)=\mathfrak{h}_{s,p}.
\]
Indeed, with simple algebraic manipulations, this choice gives
\[
\begin{split}
\left|1-\rho^\frac{sp-N}{p}\right|^{p-2}\left(1-\rho^\frac{sp-N}{p}\right)\left[\rho^{N-1}-\rho^{ps-\frac{sp-N}{p}(p-1)-1}\right]&=\left(1-\rho^\frac{sp-N}{p}\right)^p\rho^{N-1}\\
&=\rho^{sp-1}\,\rho^{N-sp}\,\left(1-\rho^\frac{sp-N}{p}\right)^p\\
&=\rho^{sp-1}\,\left|1-\rho^\frac{N-sp}{p}\right|^p.
\end{split}
\]
Moreover, as observed in \cite{FS}, according to \cite[equation (3.665)]{GR} we have
\[
\Phi_{N,s,p}(t)=|\mathbb{S}|^{N-2}\,B\left(\frac{N-1}{2},\frac{1}{2}\right)F\left(\frac{N+ps}{2},\frac{ps+2}{2};\frac{N}{2};t\right)=|\mathbb{S}^{N-2}|G(t^2),
\]
and 
\[
|\mathbb{S}^{N-2}|=2\,\frac{\pi^{\frac{N-1}{2}}}{\Gamma\left(\frac{N-1}{2}\right)}=2\,\pi\,\alpha_N.
\]
This  finally gives
\[
\mathfrak{h}_{s,p}=2\,\int_0^1 \rho^{sp-1}\,\left|1-\rho^\frac{N-sp}{p}\right|^p\,\Phi_{N,s,p}(\rho)\,d\rho=\mathcal{C}\left(\frac{sp-N}{p}\right).
\]
\end{remark} 
According to the previous remark, when $\beta=(sp-N)/ p$ from Theorem \ref{theorem:super-beta} we immediately get the following
\begin{corollary}\label{theorem:main-super}
Let $sp>N$ and	let $\Omega\subsetneq \mathbb R^N $ be an open set. 
	Then, the function
	$$
	U_\Omega:=d_{\Omega}^{\frac{sp-N}{p}},
	$$
		is a positive local weak supersolution of
		\begin{equation*}
			(-\Delta_p)^s U_\Omega = \mathfrak{h}_{s,p}  \frac{U_\Omega^{p-1}}{d_{\Omega}^{sp}}\quad \mbox{in } \Omega.
			\end{equation*}
\end{corollary}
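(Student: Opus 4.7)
The plan is to observe that this corollary is essentially a direct specialization of Theorem \ref{theorem:super-beta} combined with the explicit identification of the constant carried out in Remark \ref{rem:costanti}. There is no substantive work to be done beyond checking admissibility of the exponent and invoking the two preceding results.

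First I would verify that the choice $\beta = (sp-N)/p$ lies in the admissible interval $\bigl(0,(sp-N)/(p-1)\bigr)$ from Theorem \ref{theorem:super-beta}. Since $sp>N$ by hypothesis, we immediately get $(sp-N)/p>0$. Moreover, because $p>1$ we have $p-1<p$, so dividing the positive number $sp-N$ by the smaller positive denominator yields the strict inequality $(sp-N)/p<(sp-N)/(p-1)$. Therefore $\beta=(sp-N)/p$ is an admissible exponent, and in particular $U_\Omega = d_\Omega^{(sp-N)/p}$ is a pointwise positive function on $\Omega$.

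Next, I would apply Theorem \ref{theorem:super-beta} directly to this choice of $\beta$: this immediately yields that $U_\Omega$ is a positive local weak supersolution of
\[
(-\Delta_p)^s U_\Omega = \mathcal{C}\!\left(\tfrac{sp-N}{p}\right)\frac{U_\Omega^{p-1}}{d_\Omega^{sp}}\quad\text{in }\Omega.
\]
Finally, Remark \ref{rem:costanti} carries out the algebraic simplification showing that $\mathcal{C}((sp-N)/p)=\mathfrak{h}_{s,p}$, by rewriting the integrand in \eqref{cbeta} as $\rho^{sp-1}|1-\rho^{(N-sp)/p}|^p\,\Phi_{N,s,p}(\rho)$ (up to the normalization constants relating $\alpha_N$, $|\mathbb{S}^{N-2}|$ and $G$). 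Substituting this identification into the previous display yields exactly the statement of the corollary.

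The main obstacle, if any, is purely notational: one must be careful that the ``admissible range'' for $\beta$ in Theorem \ref{theorem:super-beta} is a half-open interval that does include the critical exponent $(sp-N)/p$ (as opposed to excluding it, which would force an approximation argument). The strict inequality $p-1<p$ rules this out, so no limiting procedure is necessary and the corollary follows immediately.
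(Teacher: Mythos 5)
Your proof is correct and follows exactly the paper's route: verify that $\beta=(sp-N)/p$ lies in $\bigl(0,(sp-N)/(p-1)\bigr)$, apply Theorem~\ref{theorem:super-beta}, and invoke Remark~\ref{rem:costanti} to identify $\mathcal{C}\bigl((sp-N)/p\bigr)=\mathfrak{h}_{s,p}$. The paper treats this as immediate, and your additional explicit check that the exponent is admissible (via $p-1<p$) is the only step it leaves unstated.
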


In order to show Theorem \ref{theorem:super-beta}, we recall that, by \cite[Theorem 1.1]{DPQ} (see also \cite[Lemma 3.1]{FS}), given $z\in \mathbb R^N$,  $sp > N$ and  $0<\beta<(sp-N)/(p-1)$,  the function
\[
	V(x):=|x-z|^{\beta}=d_{\mathbb R^N\setminus\{z\}}^{\beta}(x),
\]
belongs to $W^{s,p}_{\rm{loc}}(\mathbb{R}^N)\cap L^{p-1}_{sp}(\mathbb{R}^N)$ and is a local weak solution to
\begin{equation}\label{EL-eq}
	(-\Delta_p)^s V=
	\mathcal{C}(\beta)\frac{V^{p-1}}{d_{\mathbb R^N\setminus{\{z\}}}^{sp}}\quad \mbox{in }\mathbb R^N\setminus \{z\}.
\end{equation} 
where  $\mathcal{C}(\beta)$ is  given by \eqref{cbeta}.
Using this fact, we can prove the following preliminary result.

\begin{proposition}\label{propkey}
	Let $sp >N$ and $0<\beta<(sp-N)/(p-1)$. Let $x_0,\,x_1 \in \mathbb R^N$. Then, the function 
	$$U_1:=d_{\mathbb R^N\setminus\{x_0,x_1\}}^{\beta}$$
	is a local weak supersolution of 
	\begin{equation}\label{super1}
		(-\Delta_p)^s U_1=\mathcal{C}(\beta)\frac{U_1^{p-1}}{d_{\mathbb R^N\setminus{\{x_0,x_1\}}}^{sp}},\qquad \text{in}\ \mathbb R^N\setminus \{x_0,x_1\},
	\end{equation}
	where  $\mathcal{C}(\beta)$ is  given by \eqref{cbeta}.

\end{proposition}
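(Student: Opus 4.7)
The plan is to reduce the two-pole problem to the single-pole case already established in \cite[Theorem 1.1]{DPQ}, by writing the distance function explicitly as a minimum. Since the boundary of $\mathbb R^N \setminus \{x_0, x_1\}$ reduces to the two points $x_0$ and $x_1$, one has $d_{\mathbb R^N \setminus \{x_0, x_1\}}(x) = \min(|x-x_0|, |x-x_1|)$ and hence
\[
U_1(x) = \min\bigl(V_0(x), V_1(x)\bigr), \qquad V_i(x) := |x-x_i|^{\beta}, \quad i = 0, 1.
\]
The case $x_0 = x_1$ is already covered by \cite[Theorem 1.1]{DPQ}, so we may assume $x_0 \neq x_1$. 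By the same result, each $V_i$ is a local weak solution of \eqref{EL-eq} on $\mathbb R^N \setminus \{x_i\}$; since $V_i$ is smooth there, the identity also holds in the pointwise sense.

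The core of the argument is a pointwise comparison exploiting the monotonicity of $J_p$. Fix $x \in \mathbb R^N \setminus \{x_0, x_1\}$ with $V_0(x) \neq V_1(x)$, and suppose without loss of generality that $V_0(x) < V_1(x)$; then $U_1(x) = V_0(x)$ and $d_{\mathbb R^N \setminus \{x_0, x_1\}}(x) = |x - x_0|$. Since $U_1 \leq V_0$ everywhere, one has $U_1(x) - U_1(y) \geq V_0(x) - V_0(y)$ for every $y \in \mathbb R^N$, whence $J_p(U_1(x) - U_1(y)) \geq J_p(V_0(x) - V_0(y))$ by the monotonicity of $J_p$. Integrating in $y$ against the kernel $|x-y|^{-(N+sp)}$ (in the principal value sense) and invoking the pointwise identity satisfied by $V_0$, I would obtain
\[
(-\Delta_p)^s U_1(x) \;\geq\; (-\Delta_p)^s V_0(x) \;=\; \mathcal{C}(\beta)\,\frac{V_0(x)^{p-1}}{|x-x_0|^{sp}} \;=\; \mathcal{C}(\beta)\,\frac{U_1(x)^{p-1}}{d_{\mathbb R^N \setminus \{x_0, x_1\}}(x)^{sp}}.
\]
The symmetric case $V_1(x) < V_0(x)$ is identical, and the coincidence set $\{V_0 = V_1\}$ is the perpendicular bisector of the segment joining $x_0$ and $x_1$, which has zero Lebesgue measure. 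Hence the pointwise supersolution inequality holds almost everywhere on $\mathbb R^N \setminus \{x_0, x_1\}$.

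To convert this pointwise bound into the weak supersolution inequality of Definition \ref{def-sol}, I would multiply by an arbitrary nonnegative $\varphi \in C_0^\infty(\mathbb R^N \setminus \{x_0, x_1\})$ and invoke the standard symmetrization identity
\[
\int_{\mathbb R^N} \varphi(x)\, (-\Delta_p)^s U_1(x)\, dx = \iint_{\mathbb R^N \times \mathbb R^N} \frac{J_p(U_1(x) - U_1(y))\,(\varphi(x) - \varphi(y))}{|x-y|^{N+sp}}\, dx\, dy,
\]
which is valid by Fubini once absolute convergence is secured. The preliminary checks, which I expect to be the main technical hurdle, are (i) the membership $U_1 \in W^{s,p}_{\mathrm{loc}}(\mathbb R^N) \cap L^{p-1}_{sp}(\mathbb R^N)$, inherited from the analogous property of the one-pole functions via the bound $0 \leq U_1 \leq V_i$, and (ii) the absolute integrability required for Fubini, guaranteed by the condition $\beta < (sp-N)/(p-1)$ (which controls the tail at infinity) together with the smoothness of $U_1$ away from the coincidence hyperplane. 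Once these integrability issues are handled, the whole argument collapses to the elementary monotonicity property of $J_p$ combined with the known pointwise identity for a single-pole function.
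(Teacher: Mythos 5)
Your approach is genuinely different from the paper's. The paper follows the cutoff method of \cite[Theorem 1.1]{KKP}: it tests the equations for $V_0$ and $V_1$ with $(1-\theta_\varepsilon)\varphi$ and $\theta_\varepsilon\varphi$ respectively, where $\theta_\varepsilon=\min\{1,(V_0-V_1)_+/\varepsilon\}$, sums the resulting identities, and passes to the limit entirely within the weak formulation. You instead try to establish a \emph{pointwise} supersolution inequality using the monotonicity of $J_p$ and then integrate against a test function. Your pointwise comparison is correct: if $V_0(x)<V_1(x)$ then $U_1(x)-U_1(y)=V_0(x)-U_1(y)\geq V_0(x)-V_0(y)$ for every $y$, and hence $J_p(U_1(x)-U_1(y))\geq J_p(V_0(x)-V_0(y))$.

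However, there is a genuine gap in the step you identify as ``the main technical hurdle,'' and your proposed justification for it is incorrect. You claim absolute integrability for Fubini is ``guaranteed by the condition $\beta<(sp-N)/(p-1)$\ldots together with the smoothness of $U_1$ away from the coincidence hyperplane.'' But the obstruction is not the tail behaviour; it is the \emph{near-diagonal} behaviour for $x$ close to the hyperplane $\{V_0=V_1\}$, where $U_1$ is only Lipschitz (not $C^{1,1}$). Near the diagonal one only has the bound $|J_p(U_1(x)-U_1(y))|\lesssim |x-y|^{p-1}$, which yields, for the symmetrized integrand with $\varphi(x)$ replacing $\varphi(x)-\varphi(y)$,
\[
\int_{|y-x|<1}\frac{|x-y|^{p-1}}{|x-y|^{N+sp}}\,dy\sim\int_0^1 r^{p-2-sp}\,dr,
\]
which diverges whenever $s\geq 1-1/p$. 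So absolute integrability fails in a nontrivial parameter range, and you cannot simply invoke Fubini: the pointwise $(-\Delta_p)^s U_1(x)$ only exists as a principal value and relies on the antisymmetric cancellation that the kink at the coincidence hyperplane destroys locally. Converting your a.e.\ pointwise inequality into the weak inequality therefore requires a genuine limiting argument: truncate at $|x-y|>\varepsilon$, Fubini on the truncated region, split the inner integral into a smooth reference part (for $V_0$ or $V_1$, handled by dominated convergence since they are $C^\infty$ near $S_\varphi$) plus a nonnegative remainder (handled by Fatou), and then let $\varepsilon\to 0$. Such an argument can be made to work, but it is precisely this delicacy that the paper's $\theta_\varepsilon$-cutoff device is designed to avoid, and as written your proof omits it.
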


\begin{proof}
	We start with the obvious (yet crucial) observation that, since $\beta>0$, we have 
	
	\begin{equation}\label{crucial}
	\begin{split}
		U_1=d_{\mathbb R^N\setminus\{x_0,x_1\}}^{\beta}&=\left(\min\{d_{\mathbb R^N\setminus\{x_0\}},d_{\mathbb R^N\setminus\{x_1\}}\}  \right)^{\beta}\\
		&=\min\left\{d_{\mathbb R^N\setminus\{x_0\}}^{\beta}, d_{\mathbb R^N\setminus\{x_1\}}^{\beta}\right\}=\min{\{V_0,V_1\}},
		\end{split}
	\end{equation}
	where
	$$V_i(x)=|x-x_i|^{\beta},\qquad \text{for}\ i=0,\,1.$$
Then the function  $U_1$ belongs to $W^{s,p}_{\rm{loc}}(\mathbb{R}^N)\cap L^{p-1}_{sp}(\mathbb{R}^N)$.
	In order to prove that $U_1$ is a local  weak supersolution of \eqref{super1},  we can suitably adapt the strategy applied in the proof of  \cite[Theorem 1.1]{KKP}, where it is shown that  the minimum of two locally weakly $(s,p)$-superharmonic functions is itself a  locally weakly $(s,p)$-superharmonic function. 
In our case we need to handle the additional nonlinear term on the right-hand side, which is however local. 
\par
	Let  $\varphi\in C_0^{\infty}(\mathbb R^N\setminus\{x_0,x_1\})$ be a nonnegative test function. Then it  is admissible for the weak formulation of the equations satisfied by $V_0$ and $V_1$ (i.e. equation \eqref{EL-eq} with $z$ replaced by $x_0$ and $x_1$, respectively). For every $0<\varepsilon<1/4$ we define  $$\theta_{\varepsilon}:=\min\left\{ 1, \frac{(V_0-V_1)_{+}}{\varepsilon}\right\}.$$
	Then we consider $\eta_{1}=(1-\theta_{\varepsilon}) \varphi$  as a test function 
in  the  equation satisfied by $V_0$, and $\eta_{2}=\theta_{\varepsilon} \varphi$  as a test function  in the equation satisfied by $V_1$.  By summing up the
corresponding integrals for $V_0$ and $V_1$, we obtain
\begin{equation}\label{scomp}
\mathcal{C}(\beta) \int_{\mathbb{R}^N}  \frac{|V_0|^{p-2}V_0}{d_{\mathbb R^N\setminus\{x_0\}}^{sp}} (1-\theta_{\varepsilon}) \varphi dx+  \mathcal{C}(\beta) \int_{\mathbb{R}^N}  \frac{|V_1|^{p-2}V_1}{d_{\mathbb R^N\setminus\{x_1\}}^{sp}}\theta_{\varepsilon} \varphi dx  =\iint_{\mathbb{R}^N\times\mathbb{R}^N} \frac{ \Phi_{\varepsilon}(x,y)}{|x-y|^{N+sp}}\,dx\,dy, 
\end{equation}
where 
\[ 
\begin{split} \Phi_{\varepsilon}(x,y)&=J_p(V_0(x)-V_0(y)) ((1-\theta_{\varepsilon}(x)) \varphi(x)- (1-\theta_{\varepsilon}(y)) \varphi(y))\\
&\hspace{1em}+ J_p(V_1(x)-V_1(y)) (\theta_{\varepsilon}(x) \varphi(x)- \theta_{\varepsilon}(y) \varphi(y)). 
\end{split} 
\]
As shown in  \cite[Theorem 1.1]{KKP}, we have that 
	\begin{equation} \label{superh}\ \limsup_{\varepsilon\to 0 }\iint_{\mathbb{R}^N\times\mathbb{R}^N} \frac{ \Phi_{\varepsilon}(x,y)}{|x-y|^{N+sp}}\,dx\,dy \leq  \iint_{\mathbb{R}^N\times\mathbb{R}^N} \frac{J_p(U_1(x)-U_1(y))\,(\varphi(x)-\varphi(y))}{|x-y|^{N+sp}}\,dx\,dy.\end{equation}
We  further observe that,  by \eqref{crucial}, we have 
\begin{equation}\label{r.h.s}
		\frac{U_1^{p-1}(x)}{d_{\mathbb R^N\setminus\{x_0,x_1\}}^{sp}(x)}=\frac{\min\{V_0^{p-1}(x),V_1^{p-1}(x)\}}{\min\left\{d_{\mathbb R^N\setminus\{x_0\}}^{sp}(x),d_{\mathbb R^N\setminus \{x_1\}}^{sp}(x)\right\}}=
		{\begin{cases}
				\displaystyle	\frac{V_0^{p-1}(x)}{d_{\mathbb R^N\setminus\{x_0\}}^{sp}(x)} & \mbox{if }\, x\in S_1,\\
				\\
				\displaystyle	\frac{V_1^{p-1}(x)}{d_{\mathbb R^N\setminus\{x_1\}}^{sp}(x)} & \mbox{if  }\,x\in S_2,
		\end{cases}}
	\end{equation}
where 
$$
S_1:=\{x\in \mathbb R^N:\, |x-x_0|\le |x-x_1| \} \ \hbox{ and  } \  S_2:=\{x\in \mathbb R^N:\, |x-x_1|< |x-x_0| \}.
$$
Moreover, by definition of $\theta_{\varepsilon}$,  $S_1$ and $S_2$, we have that 
 $$\theta_{\varepsilon}=0 \  \hbox{on } \ S_1 \quad \hbox{ and } \  \theta_{\varepsilon} \to 1 \hbox{ pointwise on }  S_2 .$$
Then, taking into account that $\varphi\in C_0^{\infty}(\mathbb R^N\setminus\{x_0,x_1\})$, we can  apply the Lebesgue Dominated Convergence Theorem and we  get 
	\begin{equation} \label{nonline}\begin{split}
&\lim_{\varepsilon\to 0}   \int_{\mathbb{R}^N}  \frac{V_0^{p-1}}{d_{\mathbb R^N\setminus\{x_0\}}^{sp}} (1-\theta_{\varepsilon}) \varphi dx+  \int_{\mathbb{R}^N}  \frac{V_1^{p-1}}{d_{\mathbb R^N\setminus\{x_1\}}^{sp}}\theta_{\varepsilon} \varphi dx \\
&=    \int_{S_1}  \frac{V_0^{p-1}}{d_{\mathbb R^N\setminus\{x_0\}}^{sp}}  \varphi dx + \lim_{\varepsilon\to 0}  \left(\int_{S_2}  \frac{V_0^{p-1}}{d_{\mathbb R^N\setminus\{x_0\}}^{sp}}(1-\theta_{\varepsilon}) \varphi dx+ \int_{S_2}  \frac{V_1^{p-1}}{d_{\mathbb R^N\setminus\{x_1\}}^{sp}}\theta_{\varepsilon} \varphi dx\right)\\ 
&= \int_{S_1}  \frac{V_0^{p-1}}{d_{\mathbb R^N\setminus\{x_0\}}^{sp}}\varphi dx+	\int_{S_2}  \frac{V_1^{p-1}}{d_{\mathbb R^N\setminus\{x_1\}}^{sp}}\varphi dx=	\int_{\mathbb{R}^N}  \frac{U_1^{p-1}}{d_{\mathbb R^N\setminus\{x_1,x_0\}}^{sp}}\varphi dx,\
\end{split}
	\end{equation}
	where the last identity follows   thanks to  \eqref{r.h.s}. 
Hence, combining \eqref{superh}, \eqref{scomp} and \eqref{nonline}, we get the desired conclusion.
\end{proof}

By repeatedly applying Proposition \ref{propkey},  we have the following corollary.
\begin{corollary}\label{cor-super}
Let $sp >N$ and $0<\beta<(sp-N)/(p-1)$.	Let $n\in \mathbb N$ and $x_0,\dots,x_n \in \mathbb R^N$. Then, the function
	$$
	U_n=d_{\mathbb R^N\setminus\{x_0,\dots,x_n\}}^{\beta},	
	$$
	is a local weak supersolution of 
	\begin{equation}\label{super-n}
		(-\Delta_p)^s U_n= \mathcal{C}(\beta)\frac{U_n^{p-1}}{d_{\mathbb R^N\setminus{\{x_0,\dots,x_n\}}}^{sp}},\qquad \text{in}\ \mathbb R^N\setminus \{x_0,\dots,x_n\},
	\end{equation}
	where $\mathcal{C}(\beta)$ is given by \eqref{cbeta}.

\end{corollary}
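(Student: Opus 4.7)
The plan is to argue by induction on $n$, with base case $n=1$ being precisely Proposition \ref{propkey}.

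For the inductive step, assuming the result holds for $n-1$, the key algebraic observation is that, since $\beta>0$,
$$U_n = \min\{U_{n-1}, V_n\}, \qquad \text{where } V_n(x) := |x-x_n|^\beta,$$
because $d_{\mathbb{R}^N \setminus \{x_0,\dots,x_n\}} = \min\{d_{\mathbb{R}^N \setminus \{x_0,\dots,x_{n-1}\}}, d_{\mathbb{R}^N \setminus \{x_n\}}\}$. By the induction hypothesis, $U_{n-1}$ is a local weak supersolution of the corresponding equation on $\mathbb{R}^N \setminus \{x_0,\dots,x_{n-1}\}$; by \cite[Theorem 1.1]{DPQ}, $V_n$ is a local weak solution of \eqref{EL-eq} on $\mathbb{R}^N \setminus \{x_n\}$; and $U_n \in W^{s,p}_{\mathrm{loc}}(\mathbb{R}^N) \cap L^{p-1}_{sp}(\mathbb{R}^N)$ as a pointwise minimum.

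I would then replay the argument of Proposition \ref{propkey} verbatim, replacing $V_0$ by $U_{n-1}$, $V_1$ by $V_n$, and $\{x_0,x_1\}$ by $\{x_0,\dots,x_n\}$. Concretely, for a non-negative $\varphi \in C_0^\infty(\mathbb{R}^N \setminus \{x_0,\dots,x_n\})$ (admissible in both the supersolution inequality for $U_{n-1}$ and the equation for $V_n$, since its support avoids both singular sets), and each $0 < \varepsilon < 1/4$, I set
$$\theta_\varepsilon := \min\left\{1, \frac{(U_{n-1}-V_n)_+}{\varepsilon}\right\},$$
test the supersolution inequality for $U_{n-1}$ against the non-negative function $(1-\theta_\varepsilon)\varphi$ and the equation for $V_n$ against $\theta_\varepsilon\varphi$, and sum. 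The non-local limsup estimate \eqref{superh} transfers without change, because its proof in \cite[Theorem 1.1]{KKP} rests only on the pointwise minimum structure and the particular shape of the cutoff $\theta_\varepsilon$. For the local right-hand side, I split $\mathbb{R}^N$ into $S_1 := \{U_{n-1} \le V_n\}$ and $S_2 := \{V_n < U_{n-1}\}$; then $\theta_\varepsilon \equiv 0$ on $S_1$ and $\theta_\varepsilon \to 1$ pointwise on $S_2$, so dominated convergence yields the exact analogue of \eqref{nonline}. The analogue of identity \eqref{r.h.s} continues to hold because raising to the $(p-1)$-th or $(sp)$-th power commutes with taking the minimum of non-negative functions.

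The main obstacle, really the only subtle point compared with Proposition \ref{propkey}, is that now $U_{n-1}$ only satisfies an inequality while $V_n$ satisfies an equality, so I need to check that combining the two still yields an inequality in the right direction. This works because the test functions $(1-\theta_\varepsilon)\varphi$ and $\theta_\varepsilon\varphi$ are both non-negative, so the supersolution inequality for $U_{n-1}$ is applied in its admissible direction; summing preserves the sign $\ge$, leading to an $\varepsilon$-approximate inequality whose $\varepsilon\to 0$ limit is precisely the desired supersolution inequality for $U_n$ with constant $\mathcal{C}(\beta)$.
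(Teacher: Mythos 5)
Your proof is correct and takes essentially the same approach the paper intends: the paper offers no explicit argument beyond the phrase ``by repeatedly applying Proposition \ref{propkey},'' and your induction is precisely what makes that phrase rigorous. You also correctly identify the genuine subtlety — Proposition \ref{propkey} as stated applies to two \emph{solutions}, while the inductive step pairs a supersolution $U_{n-1}$ with a solution $V_n$ — and you correctly observe that the one-sided inequality goes the right way: testing $U_{n-1}$ against the non-negative $(1-\theta_\varepsilon)\varphi$ and $V_n$ against $\theta_\varepsilon\varphi$ and summing gives $\iint \Phi_\varepsilon \ge (\text{local terms})$ in place of the equality \eqref{scomp}, and chaining this with \eqref{superh} and the dominated-convergence limit \eqref{nonline} still closes the inequality in the direction needed. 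The claim that the \texttt{KKP} estimate \eqref{superh} transfers is also justified, since that estimate is a fact about the nonlocal form and the min structure, not about the particular right-hand sides.
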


We are now ready to prove Theorem \ref{theorem:super-beta}. The idea is to argue by approximation: we pick a countable dense subset $\cup_{i\in \mathbb N}\{x_i\}$ of $\partial\Omega$, we apply  Corollary \ref{cor-super} on the finite set $\cup_{i=0}^n\{x_i\}$,  and finally we pass to the limit as $n\to \infty$.

\begin{proof}[Proof of Theorem \ref{theorem:super-beta}]
	Let $D=\cup_{i\in \mathbb N} \{x_i\}$ be a dense subset of $\partial \Omega$ and  for every $ n\in \mathbb{N} $ we define 
	$$
	E_n=\mathbb R^N \setminus\{x_0,\dots,x_n\} \qquad \text{and}\qquad U_n=d^{\beta}_{E_n}.
	$$  
	By Corollary \ref{cor-super}, we know that, for every $n\in \mathbb N$, the function $U_n\in L_{sp}^{p-1}(\mathbb R^{N})$ is a local weak supersolution of \eqref{super-n} in $ E_n\supseteq \Omega$. Moreover, $\{d_{E_n}  \}_{n\in\mathbb{N}}$ and  $\{U_n\}_{n\in\mathbb{N}}$ are  decreasing sequences  (being $\beta>0$) and, since $D$ is dense in  $\partial \Omega$, we have that for every $x\in\Omega$
	 \begin{equation}
	 \label{limdist}
	 d_{ \Omega}(x)= \min_{z\in \partial \Omega} |x-z|= \inf_{i\in \mathbb{N}}  |x-x_i|= \inf_{n\in \mathbb{N}} d_{E_n}(x)= \lim_{n\to \infty} d_{E_n}(x),  
	 \end{equation}	
	 and 
	 \begin{equation}
	 \label{limitU} 
	 U_{\Omega,\beta}(x)=d^{\beta}_{ \Omega}(x)= \min_{z\in \partial \Omega} |x-z|^{\beta}= \inf_{i\in \mathbb{N}}  |x-x_i|^{\beta} =   \inf_{n\in \mathbb{N} } U_n(x)= \lim_{n\to \infty} U_n (x).\end{equation}
	In particular, $U_{\Omega,\beta}\in L_{sp}^{p-1}(\mathbb R^{N})$.
	In order to show that the function $U_{\Omega,\beta}$ is a local weak supersolution of \eqref{super-beta},  let $\varphi \in C^\infty_0(\Omega)$ be a non-negative function and we observe that it is an admissible test function for the weak formulation of the equation satisfied by $U_n$, for any $n \in \mathbb N$. Indeed, we have $\Omega\subseteq E_n$ by costruction.
Hence, for any $n\in \mathbb N$, we have
\begin{equation}\label{super2}
\iint_{\mathbb R^N \times \mathbb R^N} \frac {J_p(U_n(x)-U_n(y))(\varphi(x)-\varphi(y))}{|x-y|^{N+sp}}\,dx\,dy \ge \mathcal{C}(\beta) \int_{\Omega}\frac{U_n^{p-1}\varphi}{d^{sp}_{E_n}}\,dx.
\end{equation}
Since the integrand in the right-hand side of \eqref{super2} is non-negative, Fatou's Lemma in conjunction with \eqref{limdist} and  \eqref{limitU} gives
\begin{equation}
\label{r.h.s.}
	\liminf_{n\to \infty}\int_{\mathbb R^N}\frac{U_n^{p-1}\varphi}{d^{sp}_{E_n}}\,dx\ge \int_{\mathbb R^N}\frac{U_{\Omega,\beta}^{p-1}\varphi}{d^{sp}_{\Omega}}\,dx.	
\end{equation}
Let us consider now the left-hand side of \eqref{super2}.  We recall that $\varphi$ is compactly supported in $\Omega$, thus,  denoting by  $S_\varphi$ its support, we have that 
\[
\delta:=\mathrm{dist}(\partial \Omega, S_\varphi) >0.
\]
Moreover, we set
$$
K_{\delta}(\varphi)= \{y \in \Omega\,:\, \mathrm{dist}(y,S_\varphi)<  \delta/2\}.
$$ 
We can write

\[\begin{split}
	&\iint_{\mathbb R^N\times \mathbb R^N} \frac{J_p(U_n(x)-U_n(y))(\varphi(x)-\varphi(y))}{|x-y|^{N+sp}}\,dx\,dy\\
	&\hspace{1em} = \iint_{S_\varphi\times S_\varphi} \frac{J_p(U_n(x)-U_n(y))(\varphi(x)-\varphi(y))}{|x-y|^{N+sp}}\,dx\,dy \\
	&\hspace{2em}
	+2\iint_{S_\varphi\times (\mathbb R^N\setminus S_\varphi)} \frac{J_p(U_n(x)-U_n(y))(\varphi(x)-\varphi(y))}{|x-y|^{N+sp}}\,dx\,dy\\
	&\hspace{1em} = \iint_{S_\varphi\times S_\varphi}   \frac{J_p(U_n(x)-U_n(y))(\varphi(x)-\varphi(y))}{|x-y|^{N+sp}}\,dx\,dy \\
	&\hspace{2em}+	2\iint_{S_\varphi \times (K_{\delta}(\varphi)\setminus S_\varphi)}\frac{J_p(U_n(x)-U_n(y))(\varphi(x)-\varphi(y))}{|x-y|^{N+sp}}\,dx\,dy \\
	& \hspace{2em} + 2  \iint_{S_\varphi \times  (\mathbb R^N\setminus K_{\delta}(\varphi))} \frac{J_p(U_n(x)-U_n(y))(\varphi(x)-\varphi(y))}{|x-y|^{N+sp}}\,dx\,dy=:\mathcal{I}^1_n+\mathcal{I}^2_n + \mathcal I^3_n.
	\end{split}\]

Taking into account the pointwise convergence  \eqref{limitU}, we want to apply the Dominated Convergence Theorem in order to pass to the limit in each $\mathcal I^i_n$.
To this aim, we observe that for $x,y\in\Omega$ we have
\[
|U_n(x)-U_n(y)|\le  \beta (d_{E_n} (x)^{\beta -1} + d_{E_n} (y)^{\beta -1})|d_{E_n} (x) - d_{E_n} (y)|.
\]
This simply follows from  the Fundamental Theorem of Calculus, applied to the function $t\mapsto t^{\beta}$. 
Moreover, by using that $d_\Omega\le d_{E_n}$, that $\beta-1<0$,   and the $1-$Lipschitz character of the distance function, we get
\[
|U_n(x)-U_n(y)|\le  \beta (d_{\Omega} (x)^{\beta -1} + d_{\Omega} (y)^{\beta -1})|x-y|,\qquad \text{for}\ x,y\in\Omega,
\]
and, in particular, we have that
\begin{equation}\label{Lip}
	|U_n(x)-U_n(y)| \le C(\beta,\delta)|x-y|,\quad \mbox{for any  } x,\, y\,\in K_{\delta}(\varphi) .
\end{equation}

Using \eqref{Lip} and the Lipschitz continuity of $\varphi$, we deduce

\begin{equation*}
		\frac{|J_p(U_n(x)-U_n(y))||\varphi(x)-\varphi(y)|}{|x-y|^{N+sp}}
	\le   C(\beta,\delta)^{(p-1)} ||\nabla \varphi||_{L^\infty(\Omega)}\ \frac{1}{|x-y|^{N+p(s-1)}} \in L^1(K_\delta(\varphi)\times K_\delta(\varphi)).
\end{equation*}

This allows to pass to the limit in both $\mathcal I^1_n$ and $\mathcal I^2_n$, showing that
\begin{equation}\label{I_1}\begin{split}
	\lim_{n\rightarrow +\infty} (\mathcal I_n^1+\mathcal I_n^2)&= \iint_{S_\varphi\times S_\varphi}   \frac{J_p(U_{\Omega,\beta}(x)-U_{\Omega,\beta}(y))(\varphi(x)-\varphi(y))}{|x-y|^{N+sp}}\,dx\,dy \\
&\hspace{1em}+	2\iint_{S_\varphi \times (K_{\delta}(\varphi)\setminus S_\varphi)}\frac{J_p(U_{\Omega,\beta}(x)-U_{\Omega,\beta}(y))(\varphi(x)-\varphi(y))}{|x-y|^{N+sp}}\,dx\,dy.
\end{split}
\end{equation}

It remains to show that

\begin{equation}\label{I_2+I_3}
		\lim_{n\rightarrow +\infty} \mathcal I_n^3=2\iint_{S_\varphi \times (\mathbb R^N\setminus K_{\delta}(\varphi))} \frac{J_p(U_{\Omega,\beta}(x)-U_{\Omega,\beta}(y))(\varphi(x)-\varphi(y))}{|x-y|^{N+sp}}\,dx\,dy.
		\end{equation}

We start by observing that, since  $\{U_n\}_{n\in\mathbb{N}}$ is a decreasing sequence, we have
\begin{equation*}\begin{split}
	&\frac{2|J_p(U_n(x)-U_n(y)||\varphi(x)-\varphi(y)|}{|x-y|^{N+sp}}\\
		&\hspace{3em}\le 4 \|\varphi\|_{L^\infty(\Omega)}\frac{|U_n(x)-U_n(y)|^{p-1}}{|x-y|^{N+sp}}\\
		&\hspace{3em}\le {2^{p} }\|\varphi\|_{L^\infty(\Omega)}\frac{|U_0(x)|^{p-1}+|U_0(y)|^{p-1}}{{|x-y|^{N+sp}}} .
			\end{split}
	\end{equation*}

Equality \eqref{I_2+I_3} will follow, by applying again the Dominated Convergence Theorem, if we show that 
\begin{equation}\label{g_L1}g(x,y):=\frac{|U_0(x)|^{p-1}+|U_0(y)|^{p-1}}{{|x-y|^{N+sp}}} \in L^1(S_\varphi \times (\mathbb R^N\setminus K_\delta(\varphi))).
\end{equation}

In order to do that,  we note that  
\begin{equation}\label{controllo} |x-y|>\frac \delta 2 \qquad \hbox{ for every } x\in  S_{\varphi} \hbox{ and }   y\in \mathbb R^N\setminus K_\delta(\varphi).\end{equation} Hence we have that

\begin{equation}\label{g-split}
	\begin{split}
&	\iint_{S_\varphi \times (\mathbb R^N\setminus K_\delta(\varphi))}g(x,y)\,dx\,dy=\int_{S_\varphi}|U_0(x)|^{p-1}\,dx \int_{\mathbb R^N\setminus K_\delta(\varphi)}\frac{1}{|x-y|^{N+sp}}\,dy \\
	&\hspace{14em} +	\iint_{S_\varphi\times (\mathbb R^N\setminus K_\delta(\varphi))} \frac{|U_0(y)|^{p-1}}{{|x-y|^{N+sp}}}\,dx\,dy\\
		&\leq \frac{N\omega_N}{sp} \left(\frac{2}{\delta}\right)^{sp}  \int_{S_\varphi}|U_0(x)|^{p-1}\,dx
 +\iint_{S_\varphi\times (\mathbb R^N\setminus K_\delta(\varphi))}\frac{|U_0(y)|^{p-1}}{{|x-y|^{N+sp}}}\,dx\,dy.
		\end{split}
\end{equation}

Since $S_{\varphi}$ is a compact set,   the first integral on the right-hand side in \eqref{g-split} is  finite. For the second integral, we observe that
for every   $x\in S_\varphi$ and for every $y\in \mathbb R^N\setminus K_\delta(\varphi)$, by applying again \eqref{controllo},    it holds 

\[\frac{ |y|+1}{ |x-y|}\leq \frac{ |y-x|+|x|+1}{ |x-y|} \leq  1+\frac{|x|+1}{ |x-y|}\leq 1+ \frac{2(M+1)}{\delta} =C,  \]
where $M=\max_{x\in S_\varphi} |x|$. This implies that
\begin{equation}\label{fuori}
	\frac{1}{|x-y|^{N+sp}}\leq  \frac{C^{N+sp}}{(1+|y|)^{N+sp}} \qquad \hbox{for every  $x\in S_\varphi$ and $y\in \mathbb R^N\setminus K_\delta(\varphi) $}.
\end{equation}

Since $U_0$ belongs to $L^{p-1}_{sp}(\mathbb R^N)$, the above estimate implies that also the second term on the right-hand side of \eqref{g-split} is finite and thus \eqref{g_L1} holds true.
\par
Finally, combining \eqref{I_1} and  \eqref{I_2+I_3}, we deduce that 
\[ \begin{split}
	&\lim_{n\to \infty}\iint_{\mathbb R^N \times \mathbb R^N} \frac {J_p(U_n(x)-U_n(y))(\varphi(x)-\varphi(y))}{|x-y|^{N+sp}}\,dx\,dy \\
	&\hspace{1em} = \iint_{\mathbb R^N \times \mathbb R^N} \frac {J_p(U_{\Omega,\beta}(x)-U_{\Omega,\beta}(y))(\varphi(x)-\varphi(y))}{|x-y|^{N+sp}}\,dx\,dy.
\end{split}
\]

Putting together the limit above with \eqref{super2} and \eqref{r.h.s.}, we conclude that $U_{\Omega,\beta}$ is a positive local weak supersolution of \eqref{super-beta}. 
\end{proof}

\par

Now we are in a position to prove  Theorem \ref{main}.
	\begin{proof}[Proof of Theorem \ref{main}]
	%
	By combining  formula  \eqref{dual-super}  with Theorem \ref{theorem:super-beta}  when $\beta=(sp-N)/ p$, we obtain that 
	\[
	\mathfrak{h}_{s,p}(\Omega) \geq  \mathcal{C}\left(\frac{sp-N}{p}\right)=\mathfrak{h}_{s,p}.
	\] 
The last equality follows from Remark \ref{rem:costanti}.	
	\end{proof}
We conclude this section with the following result. It gives a sufficient condition for the constant $\mathfrak{h}_{s,p}(\Omega)$ not to be attained. 
\begin{proposition}
	Let $1<p<\infty$, $0<s<1$  and let $\Omega\subsetneq\mathbb{R}^N$ be an open set. Suppose that there exists a positive local weak supersolution $u$ of \eqref{eq:deltaps} with $\lambda=\mathfrak{h}_{s,p}(\Omega)$, such that
	$$u \ge \frac{1}{C}\, d_\Omega^{\frac{sp-N}{p}},$$
	for some positive constant $C$.
	Then, the infimum $\mathfrak{h}_{s,p}(\Omega)$ is not attained.  
	\par
	
	In particular, when  $sp>N$, the constant $\mathfrak{h}_{s,p}(\Omega)$ is not attained for every open set $\Omega\subsetneq\mathbb{R}^N$ such that
	\[
	\mathfrak{h}_{s,p}(\Omega)=\mathfrak{h}_{s,p}.
	\]
	\end{proposition}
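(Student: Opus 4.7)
The approach is to argue by contradiction using a nonlocal discrete Picone-type inequality, of the kind employed in \cite{BBSZ,BBZ}. Suppose $v \in \widetilde{W}^{s,p}_0(\Omega)$ attains the infimum defining $\mathfrak{h}_{s,p}(\Omega)$; replacing $v$ by $|v|$ we may assume $v \geq 0$, so that (by the Lagrange multiplier rule) $v$ is a nonnegative weak solution of \eqref{eq:deltaps} with $\lambda = \mathfrak{h}_{s,p}(\Omega)$, normalized by $\int_\Omega v^p\, d_\Omega^{-sp}\, dx = 1$ and $[v]^p_{W^{s,p}(\mathbb{R}^N)} = \mathfrak{h}_{s,p}(\Omega)$.

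The Picone-type inequality for $u > 0$ and $v \geq 0$,
\[
J_p(u(x) - u(y)) \left( \frac{v(x)^p}{u(x)^{p-1}} - \frac{v(y)^p}{u(y)^{p-1}} \right) \leq |v(x) - v(y)|^p ,
\]
with equality if and only if $v(x)\,u(y) = v(y)\,u(x)$, is the key tool. Integrating against the kernel $|x-y|^{-N-sp}$ and then using $\varphi := v^p/u^{p-1}$ as a test function in the supersolution inequality for $u$ (after a standard truncation and compact cutoff in $\Omega$, followed by a monotone/Fatou passage to the limit) yields
\[
\mathfrak{h}_{s,p}(\Omega) = [v]^p_{W^{s,p}(\mathbb{R}^N)} \geq \mathfrak{h}_{s,p}(\Omega) \int_\Omega \frac{u^{p-1}\varphi}{d_\Omega^{sp}}\, dx = \mathfrak{h}_{s,p}(\Omega) \int_\Omega \frac{v^p}{d_\Omega^{sp}}\, dx = \mathfrak{h}_{s,p}(\Omega) .
\]
The chain is therefore an equality, which forces equality in Picone's inequality, and hence $v = c\, u$ a.e.\ in $\Omega$ for some constant $c > 0$ (nonzero since $v \not\equiv 0$).

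Combining $v = cu$ with the hypothesis $u \geq C^{-1}\, d_\Omega^{(sp-N)/p}$ and using the normalization, one gets
\[
1 = \int_\Omega \frac{v^p}{d_\Omega^{sp}}\, dx \geq \left(\frac{c}{C}\right)^{\!p} \int_\Omega d_\Omega^{-N}(x)\, dx .
\]
The contradiction closes by the auxiliary fact $\int_\Omega d_\Omega^{-N}\, dx = +\infty$, valid for every open $\Omega \subsetneq \mathbb{R}^N$: fixing $x_0 \in \partial\Omega$ and choosing $y_k \in \Omega$ with $y_k \to x_0$ sufficiently fast, the balls $B(y_k, d_\Omega(y_k)/2) \subseteq \Omega$ can be made pairwise disjoint, and on each the $1$-Lipschitz character of $d_\Omega$ yields $d_\Omega \leq (3/2)\, d_\Omega(y_k)$, contributing at least $\omega_N/3^N$ to the integral.

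The second assertion is then immediate: when $sp > N$ and $\mathfrak{h}_{s,p}(\Omega) = \mathfrak{h}_{s,p}$, Corollary \ref{theorem:main-super} supplies the required supersolution $U_\Omega = d_\Omega^{(sp-N)/p}$ with $\lambda = \mathfrak{h}_{s,p} = \mathfrak{h}_{s,p}(\Omega)$ and trivially $C = 1$. The main technical obstacle is the rigorous justification of testing the weak supersolution formulation against $\varphi = v^p/u^{p-1}$, which is generally not compactly supported in $\Omega$; this requires a careful truncation and cutoff procedure (truncating $v^p/u^{p-1}$ at height $n$ and multiplying by a cutoff supported in $\{d_\Omega > \varepsilon\}$) together with monotone/dominated convergence in the nonlocal bilinear form, relying crucially on the fact that $v$ vanishes suitably at $\partial\Omega$ (e.g., via Morrey-type estimates when $sp>N$).
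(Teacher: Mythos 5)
Your proof is correct and follows essentially the same approach as the paper: both argue by contradiction, use the fractional Picone inequality tested against $v^p/u^{p-1}$ (with a suitable approximation/truncation argument) to force $v = c\,u$ via the equality case, and then derive a contradiction from the divergence of $\int_\Omega d_\Omega^{-N}\,dx$. The only cosmetic differences are that you supply an in-line proof of the divergence of $\int_\Omega d_\Omega^{-N}\,dx$ where the paper cites \cite[Lemma 3.4]{BBSZ}, and that the paper tests with $v_n^p/u^{p-1}$ for smooth nonnegative approximations $v_n$ rather than truncating $v^p/u^{p-1}$ directly.
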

\begin{proof}[Proof]
The proof follows the one of \cite[Proposition 3.5]{BBZ} with some minor changes, and uses some integrability properties of the distance function. 
	We argue by contradiction and suppose that $v\in \widetilde{W}_0^{s,p}(\Omega)$ is a minimizer for $\mathfrak{h}_{s,p}(\Omega)$. Thus, in such a case, we have  $\mathfrak{h}_{s,p}(\Omega)>0$. Following \cite[Proposition 3.5]{BBZ}, we can assume that $v$ is positive. Let us consider now a sequence of functions $v_n \in C^{\infty}_0(\Omega)$ (which we can assume to be nonnegative) approximating $v$ in $W^{s,p}(\mathbb R^N)$ and almost everywhere.
\par	
By using as a  test function in the weak formulation of the inequality satisfied by $u$, the function
	$$
	\varphi:=\frac{v_n^p}{u^{p-1}},
	$$
	and proceeding as in \cite{BBZ}, the equality cases of the fractional Picone inequality permits to infer that
	$$
	u=Cv,\qquad \text{a.\,e. in}\ \Omega,
	$$ 
	for some positive constant $C$. Thus, that there exists another, possibly different, positive constant $C$ such that
	$$
	v \ge \frac{1}{C} d_\Omega^{\frac{sp-N}{p}},\qquad \text{in}\ \Omega.
	$$
	This contradicts the minimality of $v$, since we would have
	$$ [v]_{W^{s,p}(\mathbb R^N)}^p = \mathfrak h_{s,p}(\Omega)\int_\Omega \frac{v^p}{d_\Omega^{sp}}\,dx \ge \frac{\mathfrak h_{s,p}(\Omega)}{C^p}\int_\Omega \frac{1}{d_\Omega^N}\,dx = + \infty,$$
	where the last equality follows from \cite[Lemma 3.4]{BBSZ}.
	\par
	Finally, if $sp>N$, let us suppose that $\mathfrak h_{s,p}(\Omega)=\mathfrak{h}_{s,p}$. Then, thanks to Corollary \ref{theorem:main-super}, the function $U=d_\Omega^{(sp-N)/p}$ is a positive local weak supersolution of \eqref{eq:deltaps} with $\lambda=\mathfrak{h}_{s,p}=\mathfrak{h}_{s,p}(\Omega)$. From the first part of the proof we get the desired conclusion.  
	\end{proof}
	
	\section{Asymptotics for the sharp constant}

\subsection{The case $s\nearrow 1$}
This subsection is devoted to study the limit
\[
\lim_{s\nearrow 1}(1-s)\, \mathfrak{h}_{s,p}(\Omega),
\]
for $p>N$.
In the case when $\Omega$ is a convex set or the punctured space $\mathbb R^N\setminus\{0\}$ we can show that
$$
\lim_{s\nearrow 1 }(1-s)\, \mathfrak{h}_{s,p}(\Omega)=   K_{p,N} \mathfrak{h}_{p}(\Omega),
$$
where 
\[
\mathfrak{h}_{p}(\Omega)=\inf_{u\in C^\infty_0(\Omega)}\left\{\|\nabla u\|^p_{L^{p}(\Omega)} :\, \int_{\Omega} \frac{|u|^p}{d_{\Omega}^{p}} \, dx=1\right\}\quad \text{and}\quad K_{p,N}=\frac{1}{p}\,\int_{\mathbb{S}^{N-1}} |\langle \omega,\mathbf{e}_1\rangle|^p\,d\mathcal{H}^{N-1}(\omega).
\]
We start by recalling the celebrated {\it Bourgain-Brezis-Mironescu formula} 
	\begin{equation}\label{BBM}
		\lim_{s\nearrow 1} (1-s)\,[\varphi]^p_{W^{s,p}(\mathbb{R}^N)}=K_{p,N}\,[\varphi]^p_{W^{1,p}(\mathbb{R}^N)},\qquad \mbox{ for every } \varphi\in C^\infty_0(\mathbb{R}^N),
	\end{equation}
which will be useful in the sequel. For a proof of \eqref{BBM},  see for example \cite[Corollary 3.20]{EE}.
\par
Now we are in a position to state the main result of this section.   
\begin{theorem}\label{s-to-1}  Let  $p>N$ and let  $\Omega\subsetneq \mathbb R^N$ be an open set such that $\mathfrak{h}_{p}(\Omega)=\mathfrak{h}_{p}$.  Then 
\[\lim_{s\nearrow 1 }(1-s)\, \mathfrak{h}_{s,p}(\Omega)= {K_{p,N}} \mathfrak{h}_{p}={K_{p,N}}\bigg(\frac{p-N}{p}\bigg)^p.
\]
\end{theorem}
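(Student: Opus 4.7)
The theorem will follow by combining three ingredients: a $\limsup$ bound
\[
\limsup_{s\nearrow 1}(1-s)\,\mathfrak{h}_{s,p}(\Omega)\leq K_{p,N}\,\mathfrak{h}_p(\Omega)
\]
valid for every open set $\Omega$; a $\liminf$ bound
\[
\liminf_{s\nearrow 1}(1-s)\,\mathfrak{h}_{s,p}\geq K_{p,N}\left(\tfrac{p-N}{p}\right)^p
\]
proved only for the punctured space; and the universal lower bound $\mathfrak{h}_{s,p}(\Omega)\geq \mathfrak{h}_{s,p}$ established in Theorem \ref{main}. Under the hypothesis $\mathfrak{h}_p(\Omega)=\mathfrak{h}_p=\left(\tfrac{p-N}{p}\right)^p$, chaining these three inequalities squeezes $(1-s)\,\mathfrak{h}_{s,p}(\Omega)$ between $K_{p,N}\mathfrak{h}_p$ from below and $K_{p,N}\mathfrak{h}_p(\Omega)=K_{p,N}\mathfrak{h}_p$ from above, forcing the limit to exist and equal the claimed value.

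For the $\limsup$ step, I would fix any $u\in C_0^\infty(\Omega)$ normalized by $\int_\Omega |u|^p/d_\Omega^p\,dx=1$ and insert it into the variational definition of $\mathfrak{h}_{s,p}(\Omega)$. The Bourgain-Brezis-Mironescu formula \eqref{BBM} delivers $(1-s)[u]^p_{W^{s,p}(\mathbb R^N)}\to K_{p,N}\int|\nabla u|^p\,dx$; on the denominator, $d_\Omega$ is bounded below by a positive constant on $\operatorname{supp}(u)$, so dominated convergence yields $\int_\Omega |u|^p/d_\Omega^{sp}\,dx\to 1$. Taking the infimum over admissible $u$ produces the desired $\limsup$ bound for any open $\Omega$, and the remaining liminf step is pure bookkeeping once the punctured-space estimate is known.

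The genuine content is the $\liminf$ for $\mathbb R^N\setminus\{0\}$. Since $\mathfrak{h}_{s,p}$ blows up like $(1-s)^{-1}$, extracting the sharp leading coefficient demands a careful asymptotic analysis. As the introduction suggests, the dual formulation \eqref{dual-super} is better suited here: it expresses $\mathfrak{h}_{s,p}$ as a supremum, so lower bounds are certified by exhibiting, for each $s$ near $1$, a positive local weak supersolution of $(-\Delta_p)^s u=\lambda_s\,|u|^{p-2}u/|x|^{sp}$ on $\mathbb R^N\setminus\{0\}$ with $(1-s)\lambda_s\to K_{p,N}\left(\tfrac{p-N}{p}\right)^p$. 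A natural test is the power $|x|^{(p-N)/p}$, which is the local ($s=1$) Hardy extremal; its fractional $p$-Laplacian is computable via the Frank-Seiringer integral, and the main technical hurdle is the Laplace-type evaluation of that integral as $s\nearrow 1$, concentrated near the diagonal $r=1$ where the kernel singularity and the zero of $|1-r^{(N-sp)/p}|^p$ compete to produce the divergent rate $(1-s)^{-1}$ with the correct coefficient $K_{p,N}\left(\tfrac{p-N}{p}\right)^p$.
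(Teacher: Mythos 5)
Your overall architecture — chain the $\limsup$ inequality (valid for all open sets), the $\liminf$ inequality (for the punctured space), and the universal bound from Theorem \ref{main} to squeeze $(1-s)\,\mathfrak{h}_{s,p}(\Omega)$ under the hypothesis $\mathfrak{h}_p(\Omega)=\mathfrak{h}_p$ — is exactly the paper's, and your treatment of the $\limsup$ step (BBM for the numerator, trivial convergence of the denominator since $d_\Omega$ is bounded away from $0$ and $\infty$ on the support) is a correct minor variant of the paper's Fatou argument. The only genuine divergence is in how the $\liminf$ for $\mathbb{R}^N\setminus\{0\}$ is envisioned.

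You propose a direct Laplace-type asymptotic analysis of the Frank--Seiringer-type constant $\mathcal C_{p,s}$ associated to the power $|x|^{(p-N)/p}$, extracting the $(1-s)^{-1}$ divergence and its coefficient from the competition between the kernel singularity and the vanishing factor near $r=1$. This is in principle doable, but it is a genuinely different (and more computational) route than the paper's. The paper instead proves an intermediate weak BBM-type convergence result (Lemma \ref{alternative}): for $u\in W^{1,p}_{\rm loc}\cap L^{p-1}_{sp}$,
\[
(1-s)\,\langle(-\Delta_p)^s u,\varphi\rangle \longrightarrow K_{p,N}\int |\nabla u|^{p-2}\nabla u\cdot\nabla\varphi\,dx,
\]
obtained by a convexity/Ponce argument plus a tail estimate, and then exploits that $u=|x|^{(p-N)/p}$ simultaneously solves the fractional equation with constant $\mathcal C_{p,s}$ and the local equation with constant $\mathfrak h_p$. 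Testing both equations against the same $\varphi$ and passing to the limit then delivers the coefficient $K_{p,N}\mathfrak h_p$ \emph{without ever expanding the hypergeometric integral}; the dual bound $\mathfrak h_{s,p}\ge\mathcal C_{p,s}$ finishes the argument. What your route buys is self-containedness of the asymptotics; what the paper's route buys is avoidance of any special-function analysis and automatic matching of the BBM normalization $K_{p,N}$. One detail your sketch omits and the paper makes explicit: the Del Pezzo--Quaas fundamental-solution result is invoked for $\beta=(p-N)/p$, which lies in the admissible range $(0,(sp-N)/(p-1))$ only for $s$ close enough to $1$, so the construction of the supersolution is available only on a neighborhood of $s=1$, which suffices for the $\liminf$.
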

\par
In order to show Theorem \ref{s-to-1},  we start  proving  the $\limsup$ inequality, which holds true for any open set and every $p$.
\begin{lemma}\label{limsup}
Let $1<p<\infty$, for every open set $\Omega\subsetneq \mathbb R^N$, it holds
\begin{equation}
\label{limsupsto1} \limsup_{s\nearrow 1 }(1-s)\, \mathfrak{h}_{s,p}(\Omega) \leq  K_{p,N}\mathfrak{h}_{p}(\Omega).
\end{equation}

\end{lemma}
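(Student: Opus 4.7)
The plan is to test the variational definition of $\mathfrak{h}_{s,p}(\Omega)$ against a single $u\in C_0^\infty(\Omega)$, multiply by $(1-s)$, and pass to the limit as $s\nearrow 1$ by using the Bourgain-Brezis-Mironescu formula \eqref{BBM} in the numerator and the dominated convergence theorem in the denominator. After this, one takes the infimum on the right-hand side to recover $K_{p,N}\mathfrak{h}_p(\Omega)$.

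More precisely, given any $u\in C_0^\infty(\Omega)$ with $\int_\Omega |u|^p/d_\Omega^p\,dx>0$, the variational characterization of $\mathfrak{h}_{s,p}(\Omega)$ gives
\[
(1-s)\,\mathfrak{h}_{s,p}(\Omega)\;\leq\;\frac{(1-s)\,[u]_{W^{s,p}(\mathbb{R}^N)}^p}{\displaystyle\int_{\Omega}\frac{|u|^p}{d_\Omega^{sp}}\,dx}.
\]
The numerator converges, by \eqref{BBM}, to $K_{p,N}\,\int_{\Omega}|\nabla u|^p\,dx$ as $s\nearrow 1$. For the denominator, since $u$ has compact support $S_u\Subset\Omega$, the distance $d_\Omega$ is bounded between two positive constants on $S_u$, so $d_\Omega^{sp}\to d_\Omega^p$ uniformly on $S_u$, and the denominator converges to $\int_{\Omega}|u|^p/d_\Omega^p\,dx$. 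Hence
\[
\limsup_{s\nearrow 1}(1-s)\,\mathfrak{h}_{s,p}(\Omega)\;\leq\;\frac{K_{p,N}\displaystyle\int_{\Omega}|\nabla u|^p\,dx}{\displaystyle\int_{\Omega}\frac{|u|^p}{d_\Omega^p}\,dx}.
\]

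Taking the infimum over all admissible $u\in C_0^\infty(\Omega)$ yields exactly \eqref{limsupsto1}. There is essentially no obstacle here: the only mild care is in the denominator, where we exploit the compact support of $u$ inside $\Omega$ to ensure that $d_\Omega$ is uniformly away from $0$ and from $\infty$ on $S_u$, which makes the convergence of $d_\Omega^{sp}$ to $d_\Omega^p$ trivial and legitimizes the use of dominated convergence. No hypothesis on $\Omega$ beyond openness is needed, and the argument works for every $1<p<\infty$, which is consistent with the statement of the lemma.
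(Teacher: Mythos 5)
Your proof is correct and follows essentially the same route as the paper: test the variational characterization of $\mathfrak{h}_{s,p}(\Omega)$ with a fixed $u\in C_0^\infty(\Omega)$, apply the Bourgain--Brezis--Mironescu formula \eqref{BBM} to the numerator, pass to the limit in the denominator, and then optimize over $u$. The only cosmetic difference is that the paper handles the denominator via Fatou's lemma and a near-optimizer $u_\varepsilon$, while you use uniform convergence of $d_\Omega^{sp}\to d_\Omega^p$ on the compact support followed by an infimum over $u$; both are equally valid.
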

\begin{proof} For every $\varepsilon>0$,    there exists   $u_{\varepsilon}\in C_0^{\infty}(\Omega)$ such that 
\[
\mathfrak{h}_{p}(\Omega)+\varepsilon\geq \int_{\Omega} |\nabla u_{\varepsilon}|^p dx \qquad \mbox{ and } \qquad  \left\|\frac{u_{\varepsilon}}{d_{\Omega}}\right\|_{L^p(\Omega)}=1. 
\]
Then, by using Fatou's lemma and \eqref{BBM}, we get 
\[
\begin{split}
\limsup_{s\nearrow 1 }(1-s)\, \mathfrak{h}_{s,p}(\Omega)&\leq \limsup_{s\nearrow 1 }\frac{(1-s)\,[u_{\varepsilon}]^p _{W^{s,p}(\mathbb{R}^N)}   }{  \left\|\frac{u_{\varepsilon}}{d^s_{\Omega}}\right\|_{L^p(\Omega)}}\\
&\leq K_{p,N}\,\|\nabla u_{\varepsilon}\|^p_{L^p(\mathbb R^N)}\leq K_{p,N}\,\Big(\mathfrak{h}_{p}(\Omega)+\varepsilon\Big).
\end{split}
\]
By arbitrariness of $\varepsilon>0$, the latter gives \eqref{limsupsto1}.
\end{proof}

In order to prove the $\liminf$ inequality, we are going to show that for a  Sobolev function $u$, as $s\nearrow 1$, we have
\[
(1-s)\,(-\Delta_p)^s u \to -\Delta_p u, 
\]
in weak sense, up to a normalization constant. In other words, for every test function $\varphi$ we consider the limit, as $s$ goes to $1$, of
\[
(1-s)\,\langle(-\Delta_p)^su,\varphi\rangle:=(1-s)\iint_{\mathbb R^N\times \mathbb R^N} \frac{J_p(u(x)-u(y))\,(\varphi(x)-\varphi(y))}{|x-y|^{N+sp}}dx\,dy,
\]
and show that this coincides with
\[
K_{p,N}\int_{\Omega} |\nabla u|^{p-2}\nabla u\cdot \nabla \varphi\,dx.
\]
This extends \cite[Theorem 2.8]{BS} and \cite[Lemma 5.1]{dTGCV}, by considerably relaxing the assumptions on the involved functions.
The proof will exploit the convexity of the function $t\mapsto |t|^p$ and \eqref{BBM}.
 \begin{lemma}\label{alternative}
	Let $\Omega \subseteq \mathbb R^N$ be an open set and let $\varphi\in C^\infty_0(\Omega)$. Let $0<s<1$, $1<p<\infty$, and assume that $u \in W^{1,p}_{\rm loc}(\Omega)\cap L^{p-1}_{sp}(\mathbb R^N)$. Then,
	$$\lim_{s\nearrow 1}(1-s)\iint_{\mathbb R^N\times \mathbb R^N} \frac{J_p(u(x)-u(y))\,(\varphi(x)-\varphi(y))}{|x-y|^{N+sp}}dx\,dy = K_{p,N}\int_{\Omega} |\nabla u|^{p-2}\nabla u\cdot \nabla \varphi\,dx.$$
	
\end{lemma}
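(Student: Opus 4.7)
The plan is to exploit the convexity of $t\mapsto|t|^{p}$ in order to squeeze the bilinear quantity of interest between two one-sided difference quotients of the $W^{s,p}$-seminorm, pass to the limit as $s\nearrow 1$ via a localized version of the Bourgain-Brezis-Mironescu formula \eqref{BBM}, and finally let an auxiliary parameter $t\to 0$.

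Since $\varphi\in C^\infty_0(\Omega)$ while $u$ is only locally $W^{1,p}$, I would first decouple a ``local'' and a ``far'' contribution. Set $K:=\mathrm{supp}\,\varphi$, fix a bounded Lipschitz open set $K'$ with $K\Subset K'\Subset\Omega$, and put $\delta:=\mathrm{dist}(K,\partial K')>0$. Write
\[
\iint_{\mathbb{R}^N\times\mathbb{R}^N}=\iint_{K'\times K'}+2\iint_{K'\times(\mathbb{R}^N\setminus K')}.
\]
In the second piece, $\varphi(y)=0$ while $\varphi(x)\neq 0$ forces $x\in K$, and $|x-y|\ge\delta$. Using $|J_p(u(x)-u(y))|\lesssim |u(x)|^{p-1}+|u(y)|^{p-1}$, the $|u(x)|^{p-1}$ contribution is bounded by $\|\varphi\|_{L^\infty}\,|\mathbb{S}^{N-1}|(sp\,\delta^{sp})^{-1}\int_{K}|u|^{p-1}\,dx$, while the $|u(y)|^{p-1}$ contribution is controlled, via the elementary inequality $|x-y|^{-N-sp}\le C_{K,\delta}\,(1+|y|)^{-N-sp}$ valid on $K\times(\mathbb{R}^N\setminus K')$, by a multiple of $\int_{\mathbb{R}^N}|u(y)|^{p-1}(1+|y|)^{-N-sp}\,dy$, which is finite thanks to the hypothesis $u\in L^{p-1}_{sp}(\mathbb{R}^N)$. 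Both bounds stay uniformly bounded as $s\nearrow 1$, so the remainder multiplied by $(1-s)$ vanishes.

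For the local contribution, introduce
\[
g_s(t):=(1-s)\iint_{K'\times K'}\frac{|(u+t\varphi)(x)-(u+t\varphi)(y)|^{p}}{|x-y|^{N+sp}}\,dx\,dy,
\]
which is finite, since $u+t\varphi\in W^{1,p}(K')\hookrightarrow W^{s,p}(K')$, and convex in $t$ as the integral of the convex function $r\mapsto|r|^{p}$ composed with an affine map in $t$, against a nonnegative kernel. Denoting by $\mathcal{I}_s$ the local part of the double integral under investigation, differentiation under the integral sign gives $g_s'(0)=p(1-s)\,\mathcal{I}_s$, so convexity yields, for every $t>0$,
\[
\frac{g_s(0)-g_s(-t)}{p\,t}\;\le\;(1-s)\,\mathcal{I}_s\;\le\;\frac{g_s(t)-g_s(0)}{p\,t}.
\]
Applying a local Bourgain-Brezis-Mironescu formula on the Lipschitz domain $K'$ to $u$ and $u\pm t\varphi$, the two outer quantities converge as $s\nearrow 1$ to $K_{p,N}$ times the corresponding difference quotients of $v\mapsto\int_{K'}|\nabla v|^{p}\,dx$ at $v=u$. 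Since this functional is G\^ateaux differentiable at $u$ in direction $\varphi$ with derivative $p\int_{K'}|\nabla u|^{p-2}\nabla u\cdot\nabla\varphi\,dx$ (routine dominated convergence, with majorant $C(|\nabla u|^{p-1}+|\nabla\varphi|^{p-1})|\nabla\varphi|\in L^{1}(K')$), letting $t\to 0$ closes the sandwich and produces the claimed identity, observing that $\nabla\varphi$ vanishes outside $K$ and so $\int_{K'}$ agrees with $\int_{\Omega}$ on the right-hand side.

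The main technical point I anticipate is the need to quote or reprove a local version of \eqref{BBM} valid for $W^{1,p}(K')$ on a bounded Lipschitz set (rather than the global one stated only for $C^\infty_0$ test functions), and to combine it with a tail estimate that uses \emph{only} the weighted integrability $u\in L^{p-1}_{sp}(\mathbb{R}^N)$; it is precisely this weak assumption on $u$, together with the sandwich structure provided by convexity, that allows the hypotheses on $u$ to be so mild compared with those appearing in \cite{BS,dTGCV}.
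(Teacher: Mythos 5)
Your proposal is correct and matches the paper's proof in all essential respects: the same decomposition into a local part over a Lipschitz set $K'\supseteq\mathrm{supp}\,\varphi$ and a far part controlled by $u\in L^{p-1}_{sp}(\mathbb R^N)$, the same convexity trick with auxiliary parameter $t$, and the same appeal to a local Bourgain--Brezis--Mironescu result on a bounded Lipschitz domain (the paper cites Ponce's Corollary~1 for precisely this). The only cosmetic difference is that you set up the two-sided difference-quotient sandwich directly, whereas the paper proves the one-sided $\limsup$ bound and then replaces $\varphi$ by $-\varphi$ to obtain the $\liminf$ bound — an equivalent presentation of the same convexity argument.
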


\begin{proof}
	Let  us denote by $S_\varphi$ the support of $\varphi$ and let $\Omega '\Subset\Omega$ be an open set with Lipschitz  boundary such that $S_\varphi \subseteq \Omega'$. By convexity of the map $t \rightarrow J_p(t)$, for every $t\in (0,1)$ we have:
	$$
	\frac{1}{p}[u+t\varphi]^p_{W^{s,p}(\Omega')}- \frac{1}{p}[u]^p_{W^{s,p}(\Omega ')} \ge t\, \iint_{\Omega ' \times \Omega'} \frac{J_p(u(x)-u(y))(\varphi(x)-\varphi(y))}{|x-y|^{N+sp}}\,dx\,dy.
	$$ 
Multiplying the above inequality by $(1-s)$, letting $s \nearrow 1$, and   applying  \cite[Corollary 1]{Ponce}, we deduce 
		\[ 
		\begin{split}
		\frac{K_{p,N}}{p}\left(\|\nabla u+t\nabla \varphi\|^p_{L^{p}(\Omega')}\right.&\left.- \|\nabla u  \|^p_{L^{p}(\Omega ')}\right) \\
		&\ge t\, \limsup_{s\nearrow 1}(1-s) \iint_{\Omega ' \times \Omega'} \frac{J_p(u(x)-u(y))(\varphi(x)-\varphi(y))}{|x-y|^{N+sp}}\,dx\,dy.
		\end{split}
		\]
Dividing by $t\in(0,1)$ and letting $t\searrow 0$, we have
	\begin{equation}
	\label{limitet}
	\begin{split}
	K_{p,N}&\int_{\Omega}|\nabla u|^{p-2} \nabla u\cdot \nabla \varphi\,dx\\
	& \ge \limsup_{s\nearrow 1} (1-s)\iint_{\Omega' \times \Omega'}\frac{J_p(u(x)-u(y))(\varphi(x)-\varphi(y))}{|x-y|^{N+sp}}\,dx\,dy.
	\end{split}\end{equation} 
We define \[ 
	T_s:=2\int_{\Omega'}\int_{\mathbb R^{N}\setminus \Omega'}\frac{J_p(u(x)-u(y))(\varphi(x)-\varphi(y))}{|x-y|^{N+sp}}\,dx\,dy=2\int_{ \Omega' }\varphi (x)\left(\int_{\mathbb R^N\setminus \Omega'}\frac{J_p(u(x)-u(y))}{|x-y|^{N+sp}}dy\right)\,dx.\]
We claim that $T_s$  is uniformly bounded for $s_0<s<1$, with a bound degenerating as $s_0$ goes to $0$. Indeed,  we note that 
\[
\delta:=\mathrm{dist}(\partial \Omega', S_\varphi)>0.
\] 
Then, we have
\[
\begin{split}
|T_s|&\leq 2\int_{S_\varphi} |\varphi(x)|\left(\int_{\mathbb R^N\setminus \Omega'  }\frac{|J_p(u(x)-u(y))|}{|x-y|^{N+sp}} dy\right)\,dx\\
	 	&\leq  2\|\varphi\|_{L^\infty(\Omega)}\ \iint_{S_\varphi\times\{y \in \mathbb R^N\,:\,d(y,S_\varphi)>\delta/2\}}\frac{|u(x)-u(y))|^{p-1}}{|x-y|^{N+sp}}\,dx\,dy\\
		&\le C_p \|\varphi\|_{L^\infty(\Omega)}\ \iint_{S_\varphi\times\{y \in \mathbb R^N\,:\,d(y,S_\varphi)>\delta/2\}}\frac{|u(x)|^{p-1}+|u(y)|^{p-1}}{{|x-y|^{N+sp}}}\,dx\,dy \\
	& \leq C_p\|\varphi\|_{L^\infty(\Omega)}\ \left(\frac{N\omega_N}{sp} \left(\frac{2}{\delta}\right)^{sp}  \int_{S_\varphi}|u(x)|^{p-1}\,dx+
	\iint_{S_\varphi\times\{y \in \mathbb R^N\,:\,d(y,S_\varphi)>\delta/2\}}\frac{|u(y)|^{p-1}}{{|x-y|^{N+sp}}}\,dx\,dy\right).\\
\end{split}
\]
By applying \eqref{fuori}, for every   $x\in S_\varphi$ we have that 
\[
\frac{|u(y)|^{p-1}}{|x-y|^{N+sp}}\leq  C^{N+sp} \frac{|u(y)|^{p-1}}{(1+|y|)^{N+sp}},\qquad \text{for every}\ y\in \mathbb R^N\ \text{such that}\  d(y,S_\varphi)>\delta/2.
\]
Since $u$ belongs to $L^{p-1}_{sp}(\mathbb R^N)$, the above estimate easily implies that 
$\{T_s\}_{s_0<s<1}$ is bounded.
Thus, we get 
\[
\lim_{s\nearrow 1}(1-s)T_s=0,
\]
and  \eqref{limitet}  gives 
	\[K_{p,N}\int_{\Omega}|\nabla u|^{p-2} \nabla u\cdot \nabla \varphi\,dx \ge \limsup_{s\nearrow 1} (1-s) \iint_{\mathbb R^N \times \mathbb R^N}\frac{J_p(u(x)-u(y))(\varphi(x)-\varphi(y))}{|x-y|^{N+sp}}\,dx\,dy.\]
	Finally, by replacing $\varphi$ with $-\varphi$, we get
	\[K_{p,N}\int_{\Omega}|\nabla u|^{p-2} \nabla u\cdot \nabla \varphi\,dx \le \liminf_{s\nearrow 1} (1-s) \iint_{\mathbb R^N \times \mathbb R^N}\frac{J_p(u(x)-u(y))(\varphi(x)-\varphi(y))}{|x-y|^{N+sp}}\,dx\,dy.
	\]
	Joining the last two equations,	we eventually conclude the proof.
			\end{proof}
		
		We can now give the proof of the $\liminf$ inequality, which holds true for the specific case of the punctured space $\Omega=\mathbb R^N\setminus\{0\}$.
		
		\begin{lemma}\label{liminf} Let $p>N$.   Then
$$\liminf_{s\nearrow1}(1-s) \mathfrak{h}_{s,p}\geq  K_{p,N}  \mathfrak{h}_{p}.$$
\end{lemma}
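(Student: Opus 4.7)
The plan is to use the dual formulation \eqref{dual-super} of $\mathfrak{h}_{s,p}$ together with the explicit family of local weak solutions of the fractional equation on $\mathbb R^N\setminus\{0\}$ recalled in \eqref{EL-eq}, and then to compute the asymptotics of the resulting constant via Lemma \ref{alternative}. The key idea is to \emph{freeze} the exponent at the $s$-independent value $\beta_\star:=(p-N)/p$, which is the extremal exponent of the \emph{local} Hardy problem on the punctured space (so that $\mathfrak{h}_p=\beta_\star^{\,p}$). This exponent is strictly smaller than the $s$-dependent one $(sp-N)/p$ that, by Remark \ref{rem:costanti}, would give $\mathcal C=\mathfrak{h}_{s,p}$ directly, but freezing it is what makes the passage $s\nearrow 1$ compatible with Lemma \ref{alternative}.

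First I would check that $\beta_\star\in\bigl(0,(sp-N)/(p-1)\bigr)$ for $s$ close to $1$: indeed $\beta_\star>0$ since $p>N$, and $(sp-N)/(p-1)\to (p-N)/(p-1)>\beta_\star$ as $s\nearrow 1$. By the result \cite[Theorem 1.1]{DPQ} recalled just before Proposition \ref{propkey}, the function $V(x):=|x|^{\beta_\star}$ then lies in $W^{s,p}_{\rm loc}(\mathbb R^N)\cap L^{p-1}_{sp}(\mathbb R^N)$ and is a positive local weak solution of
\[
(-\Delta_p)^s V=\mathcal C(\beta_\star)\,\frac{V^{p-1}}{|x|^{sp}}\qquad\text{in }\mathbb R^N\setminus\{0\}.
\]
In particular $V$ is an admissible positive local weak supersolution in \eqref{dual-super} with $\Omega=\mathbb R^N\setminus\{0\}$ and $\lambda=\mathcal C(\beta_\star)$, so $\mathfrak{h}_{s,p}\ge \mathcal C(\beta_\star)$ for every $s$ close enough to $1$.

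The second step is to show $\lim_{s\nearrow 1}(1-s)\,\mathcal C(\beta_\star)=K_{p,N}\,\mathfrak{h}_p$ by testing the equation. A direct computation shows that $V=|x|^{\beta_\star}$ is also a local weak solution of $-\Delta_p V=\mathfrak{h}_p\,V^{p-1}/|x|^p$ in $\mathbb R^N\setminus\{0\}$. Since $V\in W^{1,p}_{\rm loc}(\mathbb R^N\setminus\{0\})\cap L^{p-1}_{sp}(\mathbb R^N)$, Lemma \ref{alternative} applies, and for any nonnegative $\varphi\in C^\infty_0(\mathbb R^N\setminus\{0\})$ not identically zero it yields
\[
(1-s)\,\mathcal C(\beta_\star)\int\frac{V^{p-1}\varphi}{|x|^{sp}}\,dx=(1-s)\,\langle(-\Delta_p)^s V,\varphi\rangle\xrightarrow[s\nearrow 1]{}K_{p,N}\,\mathfrak{h}_p\int\frac{V^{p-1}\varphi}{|x|^p}\,dx.
\]
Dominated convergence on the compact support of $\varphi$ shows that $\int V^{p-1}|x|^{-sp}\varphi\,dx\to \int V^{p-1}|x|^{-p}\varphi\,dx\in (0,\infty)$, and dividing produces $\lim_{s\nearrow 1}(1-s)\,\mathcal C(\beta_\star)=K_{p,N}\,\mathfrak{h}_p$. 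Combined with the dual lower bound of the first step, this gives $\liminf_{s\nearrow 1}(1-s)\,\mathfrak{h}_{s,p}\ge K_{p,N}\,\mathfrak{h}_p$. The main obstacle is the applicability of Lemma \ref{alternative} to $V$, specifically the tail condition $V\in L^{p-1}_{sp}(\mathbb R^N)$, which amounts to $(p-N)(p-1)/p<sp$ and degenerates as $s$ decreases; the restriction is harmless here since $s\to 1$, but it is precisely what forces one to give up the natural sharp exponent $(sp-N)/p$ and work instead with the $s$-independent $\beta_\star$.
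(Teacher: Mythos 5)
Your proposal is correct and follows essentially the same route as the paper's proof: fix the $s$-independent exponent $\beta_\star=(p-N)/p$, use \cite[Theorem 1.1]{DPQ} and the dual formulation \eqref{dual-super} to get $\mathfrak{h}_{s,p}\ge \mathcal C(\beta_\star)$, then apply Lemma \ref{alternative} to the solution $|x|^{\beta_\star}$ and cancel the common integral to identify $\lim_{s\nearrow 1}(1-s)\mathcal C(\beta_\star)=K_{p,N}\mathfrak{h}_p$. The only (immaterial) difference is organizational: you compute the limit of $(1-s)\mathcal C(\beta_\star)$ first and then invoke the lower bound, whereas the paper extracts a subsequence realizing the $\liminf$ of $(1-s)\mathfrak{h}_{s,p}$ and inserts the inequality $\mathfrak{h}_{s,p}\ge\mathcal C(\beta_\star)$ mid-chain.
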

\begin{proof} Observe that 
\[
\beta:=\frac{p-N}p \in \left(0,\frac{sp-N}{p-1}\right),
\] 
for $s$ sufficiently close to $1$.
Hence, for such values of $s$,  by applying again \cite[Theorem 1.1]{DPQ}, the function $u(x)=  |x|^\beta$ is a positive local weak solution to
\[
(-\Delta_p)^s u=
	\mathcal{C}_{p,s}\frac{u^{p-1}}{|x|^{sp}},\quad \mbox{in }\mathbb R^N\setminus \{0\},
\]
where $\mathcal{C}_{p,s}$ is the constant given by \eqref{cbeta} when $\beta=(p-N)/p$.

By using again \eqref{dual-super}, we obtain that
\begin{equation}
\label{louerbaund}
 \mathfrak{h}_{s,p}\geq  \mathcal{C}_{p,s},
\end{equation}
for $s$ sufficiently close to $1$.
Moreover, by a direct computation, we have that  $u$ satisfies  
\[
-\Delta_p u= \mathfrak{h}_p \frac{u^{p-1}}{|x|^p},\quad \mbox{in }\mathbb R^N\setminus \{0\}.
\]
Now, let $s_j\nearrow1$ be such that 
\[
\liminf_{s\nearrow1} (1-s)\mathfrak{h}_{s,p}= \lim_{j\to \infty}(1-s_j) \mathfrak{h}_{p,s_j}.
\] 
Now we apply Lemma  \ref{alternative}, with a fixed non-negative function $\varphi \in C_0^{\infty}(\mathbb R^{N}\setminus\{0\})$ such that $\varphi\not=0$ and $u$ defined above. From the equation satisfied by $u$, we then obtain
\begin{equation}
\label{chain-ineq}
\begin{split}
 K_{p,N} \mathfrak{h}_{p}\ \int_{\mathbb R^N} \frac{u^{p-1}}{|x|^{p}}\varphi\, dx& = K_{p,N} \int_{ \mathbb R^N} |\nabla u|^{p-2}  \nabla u \cdot  \nabla \varphi\, dx\\
& = \lim_{j\to \infty}  (1-s_j) \iint_{\mathbb R^N\times \mathbb R^N} \frac{J_p(u(x)-u(y))\,(\varphi(x)-\varphi(y))}{|x-y|^{N+s_j p}}\,dx\,dy\\
& = \lim_{j\to \infty}  \left(  (1-s_j)  \mathcal{C}_{p,s_j}\int_{\mathbb R^N} \frac{u^{p-1}}{|x|^{s_jp}} \varphi\, dx\right).\\
\end{split}
\end{equation}
If we now apply \eqref{louerbaund} with $s_j$ in the place of $s$, \eqref{chain-ineq} implies that
\[
 K_{p,N} \mathfrak{h}_{p}\ \int_{\mathbb R^N} \frac{u^{p-1}}{|x|^{p}}\varphi\, dx\le  \lim_{j\to \infty} (1-s_j)  \mathfrak{h}_{p,s_j} \int_{\mathbb R^N} \frac{u^{p-1}}{|x|^{s_jp}} \varphi\, dx= \liminf_{s\nearrow1} (1-s)\mathfrak{h}_{s,p}\int_{\mathbb R^N}  \frac{u^{p-1}}{|x|^{p}}\varphi\, dx.
\]
Hence, the desired conclusion follows, by canceling the common factor. 
\end{proof}

\begin{proof} [Proof of Theorem \ref{s-to-1}]  By applying Lemma  \ref{limsup},  Theorem \ref{main} and Lemma \ref{liminf},  we have that for every open set $\Omega\subsetneq \mathbb R^N$ it holds
\[{K_{p,N}}\mathfrak{h}_{p}(\Omega)\geq  \limsup_{s\nearrow 1 }(1-s)\, \mathfrak{h}_{s,p}(\Omega)\geq   \liminf_{s\nearrow 1 }(1-s)\, \mathfrak{h}_{s,p}\geq  {K_{p,N}} \mathfrak{h}_{p}
\]

When $\mathfrak{h}_{p}(\Omega)=\mathfrak{h}_{p}$, this implies that  
$$\lim_{s\nearrow 1 }(1-s)\, \mathfrak{h}_{s,p}(\Omega)=\mathfrak{h}_{p}.$$

\end{proof}

\begin{remark}
Actually, with the same proof, one can prove the analogue of Theorem \ref{s-to-1} for convex sets. In other words, if $1<p<\infty$ and $\Omega\subseteq \mathbb R^N$ is a convex set, then we can obtain
\begin{equation}
		\label{convex}\lim_{s\nearrow 1 }(1-s)\, \mathfrak{h}_{s,p}(\Omega)= K_{p,N} \mathfrak{h}_{p}(\Omega)=K_{p,N}\bigg(\frac{p-1}{p}\bigg)^p.
		\end{equation}
This is possible since, for fixed $p>1$,  we have that $sp\geq 1$ when $s$ is sufficiently close to $1$ and, in this range,  for any convex set $\Omega$ it holds
	$$
	\mathfrak{h}_{s,p}(\Omega)=\mathfrak{h}_{s,p}(\mathbb H^N_+),\qquad \text{where}\  
	\mathbb H_N^+:= \mathbb R^{N-1}\times (0,+\infty),
	$$ 
see   \cite[Theorems 6.3]{BBZ}. Moreover, in the specific case of the half-space $\mathbb{H}^N_+$, for $s$ sufficiently close to $1$,  by \cite[Theorem 5.2]{BBZ}, we have that $u(x)=  |x|^{(p-1)/p}$ is a positive local weak solution to the  equation  
\[
(-\Delta_p)^s V=C_{p,s} \frac{V^{p-1}}{d_{\Omega}^{sp}},\qquad \text{in}\ \mathbb H_N^+,
\]
with a suitable positive costant $C_{p,s}$. By using   again formula \eqref{dual-super}, we obtain that
 \[\mathfrak{h}_{s,p} (\mathbb H_N^+)\geq  C_{p,s}.\]
Moreover, by direct verification we see that such a function $u$ is also a (actually classical) positive solution of the equation
\[
-\Delta_p V=\left(\frac{p-1}{p}\right)^p\,\frac{V^{p-1}}{d_{\Omega}^{p}},\qquad \text{in}\ \mathbb H_N^+.
\]
Hence, by using such a function $u$ and  arguing exactly as in \eqref{chain-ineq}, we can  conclude that 
\[
\liminf_{s\nearrow 1}(1-s)\mathfrak{h}_{s,p}(\Omega)=\liminf_{s\nearrow1}(1-s) \mathfrak{h}_{s,p}(\mathbb H_N^+)\geq K_{p,N}   \mathfrak{h}_{p}(\mathbb H_N^+)= K_{p,N}  \left(\frac {p-1}{p}\right)^p.
\]
The  $\limsup$ inequality is provided by  Lemma \ref{limsup}. Hence  the limit \eqref{convex} follows.
 	\end{remark}

	\subsection{The case $p\nearrow \infty$}
	In this subsection we show the following theorem.
	 \begin{theorem}\label{pinfty}
Let  $0<s\leq 1$  and  let $\Omega\subsetneq \mathbb{R}^N$  be an open set.  Then 
\begin{equation}
\label{hardyinfty}
\lim_{p\to \infty} (\mathfrak{h}_{s,p}(\Omega))^{\frac{1}{p}}=1.
\end{equation}
\end{theorem}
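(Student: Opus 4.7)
The plan is to prove the theorem by establishing matching $\liminf$ and $\limsup$ inequalities for $(\mathfrak{h}_{s,p}(\Omega))^{1/p}$, both valid for a general open set $\Omega \subsetneq \mathbb{R}^N$.

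For the $\liminf$, I would observe that, for $p$ sufficiently large, $sp>N$, so Theorem \ref{main} (or \eqref{s=1} when $s=1$) yields $\mathfrak{h}_{s,p}(\Omega)\geq \mathfrak{h}_{s,p}$. It is thus enough to show $\liminf_{p\to\infty}(\mathfrak{h}_{s,p})^{1/p}\geq 1$. Starting from the Frank--Seiringer formula and setting $\gamma_p:=(sp-N)/p$, the simplification
\[
\mathfrak{h}_{s,p}=2\int_0^1 r^{N-1}(1-r^{\gamma_p})^p\,\Phi_{N,s,p}(r)\,dr
\]
becomes the natural starting point. Restricting the integration to the shrinking interval $r\in(0,Ap^{-1/\gamma_p})$ for a fixed large $A$, one has $(1-r^{\gamma_p})^p\geq (1-A^{\gamma_p}/p)^p\to e^{-A^{\gamma_p}}$ on that range, while $\Phi_{N,s,p}(r)\to|\mathbb{S}^{N-1}|$ uniformly (since the factor $(1-2tr+r^2)^{(N+sp)/2}$ tends to $1$ as $(N+sp)r\to 0$). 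This produces a polynomial lower bound $\mathfrak{h}_{s,p}\gtrsim c_{s,N,A}\,p^{-N/\gamma_p}$, whose $p$-th root tends to $1$ since $\gamma_p\to s$ and $p^{-N/(\gamma_p p)}\to 1$. The case $s=1$ is direct: $\mathfrak{h}_{1,p}=((p-N)/p)^p$ immediately gives $(\mathfrak{h}_{1,p})^{1/p}=(p-N)/p\to 1$.

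For the $\limsup$, I would use an explicit scaled test function. Fix $x_0\in\Omega$ with $d_0:=d_\Omega(x_0)>0$, and, for each $R>2$, pick $\psi_R\in C_0^\infty(B_R)$ with $\psi_R\equiv 1$ on $B_1$, $0\leq\psi_R\leq 1$ and $\|\nabla\psi_R\|_\infty\leq 2/(R-1)$. Setting $r:=d_0/R$ and $u_R(x):=\psi_R((x-x_0)/r)$ gives a test function in $C_0^\infty(B_{d_0}(x_0))\subseteq C_0^\infty(\Omega)$. Scaling yields $[u_R]^p_{W^{s,p}(\mathbb{R}^N)}=r^{N-sp}[\psi_R]^p_{W^{s,p}(\mathbb{R}^N)}$, while $u_R\equiv 1$ on $B_r(x_0)$ and $d_\Omega\leq d_0+r$ on that ball give $\int_\Omega |u_R|^p/d_\Omega^{sp}\,dx\geq \omega_N r^N/(d_0+r)^{sp}$. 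Combining these,
\[
\mathfrak{h}_{s,p}(\Omega)\leq (R+1)^{sp}\,[\psi_R]^p_{W^{s,p}(\mathbb{R}^N)}\big/\omega_N.
\]
The key remaining bound is
\[
[\psi_R]^p_{W^{s,p}(\mathbb{R}^N)}\leq C_{s,N}\,R^N(R-1)^{-sp}\big/p,
\]
proved by splitting the double integral at $|x-y|=R-1$: the Lipschitz bound $|\psi_R(x)-\psi_R(y)|\leq 2|x-y|/(R-1)$ controls close pairs, and the trivial bound $|\psi_R(x)-\psi_R(y)|\leq 1$ combined with $\mathrm{supp}\,\psi_R\subseteq B_R$ controls far pairs. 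Taking $p$-th roots, $\limsup_p([\psi_R]^p)^{1/p}\leq (R-1)^{-s}$, whence $\limsup_p(\mathfrak{h}_{s,p}(\Omega))^{1/p}\leq ((R+1)/(R-1))^s$; sending $R\to\infty$ produces the required $\limsup\leq 1$.

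The main obstacle is the sharp scaling in $R$ of the Gagliardo seminorm of the bump $\psi_R$: one must verify that the near-diagonal and far-field contributions to $[\psi_R]^p$ carry exactly the same $R$-dependence $(R-1)^{-sp}$, so that the geometric factor $(R+1)^s$ inherited from the distance-function estimate is perfectly compensated and the bound collapses to $1$ as $R\to\infty$; getting this scaling right requires careful choice of $r=d_0/R$ and of the splitting radius $R-1$.
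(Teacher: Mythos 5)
Your overall strategy (matching $\liminf$ and $\limsup$ bounds for $(\mathfrak{h}_{s,p}(\Omega))^{1/p}$) coincides with the paper's, and your $\liminf$ argument is essentially equivalent: you reduce to $\Omega=\mathbb R^N\setminus\{0\}$ via Theorem \ref{main} and then analyze the Frank--Seiringer integral. Where the paper bounds $\Phi_{N,s,p}$ from below by a $(s,p)$-independent constant $C_N$ on all of $(0,1)$ and uses the $L^p_\mu\to L^\infty_\mu$ convergence of $1-r^{s-N/p_0}$, you instead localize the $r$-integral to a shrinking window $(0,Ap^{-1/\gamma_p})$ and estimate $\Phi_{N,s,p}(r)\to|\mathbb{S}^{N-1}|$ there. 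Both routes produce a polynomial lower bound whose $p$-th root tends to $1$, so this part is fine.

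Your $\limsup$ argument, however, has a genuine gap. You pick $\psi_R$ with $\|\nabla\psi_R\|_\infty\le 2/(R-1)$ and claim $[\psi_R]^p_{W^{s,p}(\mathbb R^N)}\le C_{s,N}\,R^N(R-1)^{-sp}/p$ with $C_{s,N}$ independent of $p$. But the near-diagonal contribution (split at $|x-y|=R-1$) satisfies
\[
\iint_{|x-y|<R-1}\frac{|\psi_R(x)-\psi_R(y)|^p}{|x-y|^{N+sp}}\,dx\,dy
\le 2|B_R|\left(\frac{2}{R-1}\right)^p\frac{N\omega_N(R-1)^{p(1-s)}}{p(1-s)}
= \frac{C_{s,N}\,R^N\,2^p}{p\,(R-1)^{sp}},
\]
and the factor $2^p$ \emph{survives} the $p$-th root: you actually obtain $\limsup_p\big([\psi_R]^p_{W^{s,p}}\big)^{1/p}\le 2\,(R-1)^{-s}$, not $(R-1)^{-s}$. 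Feeding this into your inequality $\mathfrak{h}_{s,p}(\Omega)\le (R+1)^{sp}[\psi_R]^p_{W^{s,p}}/\omega_N$ gives $\limsup_p(\mathfrak{h}_{s,p}(\Omega))^{1/p}\le 2\big((R+1)/(R-1)\big)^s\to 2$ as $R\to\infty$, which does not close the argument. (The same factor of $2$ appears in your $s=1$ case.) To repair this you must take the Lipschitz constant of the bump to be essentially $1/(R-1)$: for instance, use the truncated cone $\psi_R(x)=\max\{0,\min\{1,(R-|x|)/(R-1)\}\}$, which is admissible via \eqref{inf-W}, or introduce a slack parameter $\|\nabla\psi_R\|_\infty\le(1+\delta)/(R-1)$ and send $\delta\to 0$ at the end. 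Note that the paper sidesteps this issue entirely: it takes the test function $\varphi_\varepsilon(x)=(\varepsilon+(r-|x-x_0|)_+)^s-\varepsilon^s$, whose $s$-H\"older seminorm is $\le 1$ by a one-line computation, and invokes the Morrey-type result \cite[Lemma 2.4]{BPSk} to pass from $([\varphi_\varepsilon]^p_{W^{s,p}})^{1/p}$ to $[\varphi_\varepsilon]_{C^{0,s}}$, avoiding any hand-estimate of the Gagliardo seminorm.
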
 
\begin{proof} 
The case $s=1$ is contained in \cite[Theorem 4.4]{BPZ}.
In order to show the limit  in \eqref{hardyinfty} for $\mathfrak{h}_{s,p}(\Omega)$ when $0<s<1$, first we show that the $\limsup$ is smaller than or equal to $1$. To this aim, it is sufficient to use a suitable test function. For every $x_0\in \Omega$, take $r<d_\Omega(x_0)$ and define 
\begin{equation}
\label{trial}
\varphi_\varepsilon(x)=\big( \varepsilon+(r-|x-x_0|)_+   \big)^s -\varepsilon^s, \qquad \mbox{ for } x\in \mathbb{R}^N.
\end{equation}
Since $\varphi_\varepsilon\in C^{0,1}(\mathbb{R}^N)$ and vanishes on $\mathbb{R}^N\setminus B_r(x_0)$, we have that $\varphi_\varepsilon\in \widetilde{W}^{s,p}_0(B_r(x_0))\subseteq \widetilde{W}_{0}^{s,p}(\Omega)$.
Thus, by recalling \eqref{inf-W}, we have that
\[
(\mathfrak{h}_{s,p}(\Omega))^\frac{1}{p}\le 
\frac{[\varphi_\varepsilon]_{W^{s,p}(\mathbb{R}^N)}}{\displaystyle\left\|\frac{ \varphi_\varepsilon}{d_\Omega^s}\right\|_{L^p(\Omega)}}.
\]
By sending $p$ to $\infty$ and by using  \cite[Lemma 2.4]{BPSk}, we obtain that
\begin{equation}\label{limsupinfty}
\begin{split}
\limsup_{p\to\infty} \Big(\mathfrak{h}_{s,p}(\Omega)\Big)^\frac{1}{p}& \leq \limsup_{p\to\infty} \frac{[\varphi_\varepsilon]_{W^{s,p}(\mathbb R^N)}}{\displaystyle\left\|\frac{ \varphi_\varepsilon}{d_\Omega^s}\right\|_{L^p(\Omega)}}
=\frac{[\varphi_\varepsilon]_{C^{0,s}(\mathbb R^N)}}{\displaystyle\left\|\frac{ \varphi_\varepsilon}{d_\Omega^s}\right\|_{L^\infty(B_r(x_0))}}.\\
\end{split}
\end{equation}
We now observe that for every $x,y\in\mathbb{R}^N$
\[
\begin{split}
|\varphi_\varepsilon(x)-\varphi_\varepsilon(y)|&=\left|\big(\varepsilon+(r-|x-x_0|)_+ \big)^s-\big(\varepsilon+(r-|y-x_0|)_+ \big)^s\right|\\
&\le |(r-|x-x_0|)_+ - (r-|y-x_0|)_+ |^s\\
&\le \big||x-x_0|-|y-x_0|\big|^s\le |x-y|^s,
\end{split}
\]
which shows that
\begin{equation}
\label{sholder}
[\varphi_\varepsilon]_{C^{0,s}(\mathbb{R}^N)}\le 1.
\end{equation}
Thus, from \eqref{limsupinfty} we get
\[
\begin{split}
\limsup_{p\to\infty} \Big(\mathfrak{h}_{s,p}(\Omega)\Big)^\frac{1}{p}\le \frac{1}{\displaystyle\left\|\frac{ \varphi_\varepsilon}{d_\Omega^s}\right\|_{L^\infty(B_r(x_0))}}&=\inf_{x\in B_r(x_0)} \frac{d_\Omega(x)^s}{\big( \varepsilon+(r-|x-x_0|)_+\big)^s -\varepsilon^s}\\
&\le \frac{d_\Omega(x_0)^s}{(r+\varepsilon)^s -\varepsilon^s}.
\end{split}
\]
By first taking the limit as $\varepsilon$ goes to $0$ and then as $r$ goes to $d_\Omega(x_0)$, we finally obtain
\[
\limsup_{p\to\infty} \Big(\mathfrak{h}_{s,p}(\Omega)\Big)^\frac{1}{p}\le 1.
\]
%

\noindent If we show  that 
\begin{equation}\label{liminf-h}
	\liminf_{p\to\infty} \left(\mathfrak{h}_{s,p}\right)^\frac{1}{p} \geq  1, \end{equation}
in view of  Theorem \ref{main} and of the previous $\limsup$, we obtain the desired conclusion \eqref{hardyinfty}. 
\par
We recall that
	\[
	\mathfrak{h}_{s,p}=2\,\int_0^1 r^{sp-1}\,\left|1-r^\frac{N-sp}{p}\right|^p\,\Phi_{N,s,p}(r)\,dr>0,
	\]
	where, for every $0<r<1$, the quantity $\Phi_{N,s,p}(r)$ is given by
	\begin{equation*}
		\label{phistrange}
		\Phi_{N,s,p}(r)=\begin{cases}|\mathbb{S}^{N-2}|\,\displaystyle  \int_{-1}^1 \frac{(1-t^2)^\frac{N-3}{2}}{(1-2\,t\,r+r^2)^\frac{N+sp}{2}}\,dt,& \mbox{ if  }N\ge 2,\\
		\displaystyle \frac{1}{(1-r)^{1+sp}}+\frac{1}{(1+r)^{1+sp}},& \mbox{ if }N= 1.
		\end{cases}
	\end{equation*}
	By a simple computation, one can see that 
	\begin{equation}\label{h}
		\mathfrak{h}_{s,p}=2\,\int_0^1 r^{N-1}\,\left(1-r^\frac{sp-N}{p}\right)^p\,\Phi_{N,s,p}(r)\,dr.
	\end{equation}
In the case $N\geq 2$, we observe that, for any $r\in (0,1)$ and for any $t\ge 1/2$, one has 
	$$1-2tr+r^2=1+r(r-2t)\le 1+r(r-1)\le 1.$$
Hence,
	\[\Phi_{N,s,p}(r)\ge|\mathbb{S}^{N-2}|\,\int_{1/2}^1 {(1-t^2)^\frac{N-3}{2}}\,dt.\]
	In the case $N=1$ we have that $$ \Phi_{N,s,p}(r)\geq  \Phi_{N,s,p}(0)=2.$$
	Thus, \eqref{h} implies  that
	\begin{equation*}
		\big(	\mathfrak{h}_{s,p}\big)^{\frac{1}{p}}\ge C_N^{\frac{1}{p}} \left(\int_0^1 r^{N-1}\,\left(1-r^\frac{sp-N}{p}\right)^p\,dr\right)^{\frac{1}{p}},
	\end{equation*}
	where we have set
\[ C_N:= \begin{cases} 2|\mathbb{S}^{N-2}|\,  \int_{1/2}^1 {(1-t^2)^\frac{N-3}{2}}\,dt ,& \hbox{ if  }N\ge 2\\
	2 & \hbox{ if }N= 1.
		\end{cases}
\]
Since, $C_N^{\frac{1}{p}}\rightarrow 1$ as $p\nearrow \infty$, in order to prove \eqref{liminf-h},  it is sufficient to show that, for every $N\geq 1$, it holds
	\begin{equation}\label{done}\liminf_{p\rightarrow \infty}\left(\int_0^1 r^{N-1}\,\left(1-r^\frac{sp-N}{p}\right)^p\,dr\right)^{\frac{1}{p}}\ge 1.
	\end{equation}
	In order to do that, we observe that, for any $r\in (0,1)$ and for any $p\ge p_0$, with $p_0$ fixed such that $p_0\,s>N$, it holds $r^{s-N/p}\le r^{s-N/p_0}$, and thus
	\[
	\left(\int_0^1 r^{N-1}\,\left(1-r^\frac{sp-N}{p}\right)^p\,dr\right)^{\frac{1}{p}}\ge \left(\int_0^1 r^{N-1}\,\left(1-r^{s-\frac{N}{p_0}}\right)^p\,dr\right)^{\frac{1}{p}}=\left\|1-r^{s-\frac{N}{p_0}}\right\|_{L^p_{\mu((0,1))}},
	\]
where we have denoted by $\|\cdot\|_{L^p_{\mu((0,1))}}$ the $L^p$-norm with respect to the measure $d\mu=r^{N-1}\,dr$.
Finally, taking the limit as $p\nearrow \infty$, we deduce that
$$
\liminf_{p\rightarrow \infty}\left(\int_0^1 r^{N-1}\,\left(1-r^\frac{sp-N}{p}\right)^p\,dr\right)^{\frac{1}{p}}\ge \sup_{r\in (0,1)}|1-r^{s-\frac{N}{p_0}}|=1.
$$
This shows \eqref{done}, thus concluding the proof of \eqref{liminf-h}.
\end{proof}

\section{A Cheeger type inequality}
 In the main theorem of this section,  we  provide a  lower bound for $\lambda_{s,p}(\Omega)$ given by \eqref{speigenv},  in terms of the classical and fractional {\it Cheeger constants} 
$h_{1}(\Omega)$ and  $h_{s}(\Omega)$. We recall that
 \[
h_{s}(\Omega)=\inf\bigg\{\frac{P_s(E)}{|E|}\ :\ E\Subset\Omega \ \text{smooth},\, |E|>0\bigg\}
\]
where   $$P_s(E)=[1_E]_{W^{s,1}(\mathbb{R}^N)}= \iint_{\mathbb{R}^N\times \mathbb{R}^N} \frac{|1_E(x)-1_E(y)|}{|x-y|^{N+s}}\,dx\,dy$$ is the nonlocal $s$-perimeter of $E$.
We explicitly note that our result covers also the case $s=1$.
\begin{theorem}\label{cheeger} Let $0<s\leq 1$ and $sp>N$.  Let $\Omega\subsetneq \mathbb{R}^N$  be an open set and define
\[
\lambda_{s,p}(\Omega)=\inf_{u\in C^\infty_0(\Omega)}\left\{[u]^p_{W^{s,p}(\mathbb{R}^N)} :\, \int_{\Omega} |u|^p \, dx=1\right\}.
\]
If $r_{\Omega}<+\infty$  then it holds
\begin{equation}
\label{hardy-cheeger2}   
\lambda_{s,p}(\Omega)\geq \mathfrak{h}_{s,p}\, \left(\frac{h_1(\Omega)}{N}\right)^{sp},
\end{equation}
where $h_1(\Omega)$ is defined by \eqref{cheegcon}. In particular,  for $0<s<1$ we also have
\begin{equation}\label{cheeger-s}  \lambda_{s,p}(\Omega)\geq \frac{\mathfrak{h}_{s,p}}{N^{sp}}  \left(\frac{(1-s)\,s}{2N \omega_N} h_s(\Omega)\right)^p.
\end{equation}

 \end{theorem}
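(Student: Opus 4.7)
The plan is to derive both \eqref{hardy-cheeger2} and \eqref{cheeger-s} as consequences of Theorem \ref{main}, coupled with two geometric comparisons: one converting the weighted Hardy term into the plain $L^p$-norm via the inradius, and another comparing the fractional and classical Cheeger constants.

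First, I combine Theorem \ref{main} with the trivial bound $d_\Omega(x) \le r_\Omega$ for $x\in\Omega$. For every $u \in C^\infty_0(\Omega)$,
\[
[u]_{W^{s,p}(\mathbb R^N)}^p \;\ge\; \mathfrak{h}_{s,p}(\Omega)\int_\Omega \frac{|u|^p}{d_\Omega^{sp}}\,dx \;\ge\; \mathfrak{h}_{s,p}\int_\Omega \frac{|u|^p}{d_\Omega^{sp}}\,dx \;\ge\; \frac{\mathfrak{h}_{s,p}}{r_\Omega^{sp}}\int_\Omega |u|^p\,dx.
\]
Passing to the infimum over $u$ with $\|u\|_{L^p(\Omega)}=1$ yields $\lambda_{s,p}(\Omega) \ge \mathfrak{h}_{s,p}/r_\Omega^{sp}$. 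To relate $r_\Omega$ to $h_1(\Omega)$, I test the definition \eqref{cheegcon} on balls $B_r(x_0) \Subset \Omega$, using $P(B_r)/|B_r| = N/r$; letting $r\nearrow r_\Omega$ gives $h_1(\Omega) \le N/r_\Omega$, and substitution yields \eqref{hardy-cheeger2}.

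For \eqref{cheeger-s}, I reduce to \eqref{hardy-cheeger2} via a Gagliardo-type interpolation of the fractional perimeter. Specifically, I would establish
\[
P_s(E) \;\le\; \frac{2N\omega_N}{s(1-s)}\, P(E)^s\, |E|^{1-s}
\]
for every smooth set $E \Subset \Omega$. To prove this, fix $\sigma > 0$ and split the double integral defining $P_s(E)$ into the near region $\{|x-y|<\sigma\}$ and the far region $\{|x-y|\ge\sigma\}$. On the far region, use $|1_E(x)-1_E(y)| \le 1_E(x)+1_E(y)$ and integrate in polar coordinates to bound the contribution by $2N\omega_N |E|/(s\sigma^s)$. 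On the near region, invoke the $BV$-translation estimate $\|1_E(\cdot + h) - 1_E\|_{L^1(\mathbb R^N)} \le |h|\,P(E)$, which after integration in $h$ produces a contribution bounded by $N\omega_N P(E)\sigma^{1-s}/(1-s)$. Choosing $\sigma = 2|E|/P(E)$ balances the two terms and gives the displayed inequality.

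Dividing by $|E|$ and passing to the infimum over smooth $E \Subset \Omega$ shows $h_s(\Omega) \le \frac{2N\omega_N}{s(1-s)} h_1(\Omega)^s$, equivalently
\[
\left(\frac{s(1-s)}{2N\omega_N}\,h_s(\Omega)\right)^p \le h_1(\Omega)^{sp},
\]
and inserting this into \eqref{hardy-cheeger2} produces \eqref{cheeger-s}. The main technical step is the $\sigma$-splitting interpolation, which rests on the sharp $BV$-translation identity; the remaining manipulations are straightforward algebraic consequences of Theorem \ref{main} together with the inradius comparisons. The hypothesis $r_\Omega < +\infty$ is exactly what is needed to make the first reduction nontrivial.
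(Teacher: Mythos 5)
Your proof is correct and follows essentially the same route as the paper: Theorem \ref{main} yields $\mathfrak{h}_{s,p}(\Omega)\geq\mathfrak{h}_{s,p}$, the pointwise bound $d_\Omega\leq r_\Omega$ gives $\lambda_{s,p}(\Omega)\geq \mathfrak{h}_{s,p}/r_\Omega^{sp}$, the inradius comparison $h_1(\Omega)\leq N/r_\Omega$ produces \eqref{hardy-cheeger2}, and the nonlocal-to-local perimeter comparison yields \eqref{cheeger-s}. The only difference is that where the paper simply cites \cite[Corollary~4.4]{BLP} for the inequality $P_s(E)\leq\frac{2N\omega_N}{s(1-s)}P(E)^s|E|^{1-s}$, you reprove it from scratch by a near/far splitting of the Gagliardo seminorm together with the $BV$ translation estimate $\|1_E(\cdot+h)-1_E\|_{L^1}\leq|h|\,P(E)$ and optimization over the splitting radius $\sigma=2|E|/P(E)$; your computation in fact gives the slightly sharper constant $2^{1-s}N\omega_N/(s(1-s))$, which of course implies the stated one.
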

\begin{proof}
Let us suppose that $r_{\Omega}<+\infty$. 
Since $sp>N$,  thanks to Theorem  \ref{main}, we have that $\Omega$ satisfies the Hardy inequality with  $\mathfrak{h}_{s,p}(\Omega)\geq \mathfrak{h}_{s,p}>0$. 
Then

			\[ \int_{\Omega} |u|^p \, dx \le r^{sp}_{\Omega} \int_{\Omega} \frac{|u|^p}{d_{\Omega}^{sp}} \, dx \le \frac{1}{\mathfrak{h}_{s,p}(\Omega)} \, r_{\Omega}^{sp}\,[u]^p _{W^{s,p}(\mathbb R^N)}\, \quad \mbox{ for every } u\in C^{\infty}_0(\Omega). 
			\]
			By taking the infimum on $C^{\infty}_0(\Omega)$, we easily get  \begin{equation}\label{hardy-inradius}
			\lambda_{s,p}(\Omega)\geq \frac{\mathfrak{h}_{s,p}(\Omega)}{r^{sp}_{\Omega}}\geq \frac{\mathfrak{h}_{s,p}}{r^{sp}_{\Omega}}.
\end{equation}
The above estimate,  combined with the well  known inequality
 \begin{equation}\label{inr-chee} h_1(\Omega)\leq  \frac N {r_{\Omega}}, \end{equation}
 gives \eqref{hardy-cheeger2}.
 In order to show \eqref{cheeger-s}, it is sufficient to note  that, thanks to \cite[Corollary 4.4]{BLP}, for every open bounded set $E\Subset\Omega$ with smooth boundary, it holds
\[
\left(\frac{P(E)}{|E|}\right)^s\geq \frac{P_s(E)}{|E|}  \frac{(1-s)s}{2N \omega_N} .
\]
 Hence, by arbitrariness of $E$ we get  
 \[
( h_1(\Omega))^s\geq  \frac{(1-s)\,s}{2N \omega_N}  \inf\bigg\{\frac{P_s(E)}{|E|}\ :\ E\Subset\Omega\ \text{smooth},\, |E|>0\bigg\}= \frac{(1-s)\,s}{2N \omega_N}   h_s(\Omega).
 \]
Joining  \eqref{hardy-cheeger2} with the above inequality,   we obtain
\eqref{cheeger-s}.
	\end{proof}
			
		\begin{remark}\label{cheegerimpr}	
We note that our  estimate   \eqref{hardy-cheeger2}   appears to be  new already in the case $s=1$, where  it  improves (for $p>N$)
 the 
celebrated Cheeger inequality
\begin{equation}\label{cheeg} \lambda_{p}(\Omega)\geq \left ( \frac{h_1(\Omega)}{p}   \right)^p,
\end{equation}
valid for every $1<p<\infty$ and for every open set $\Omega$ (for a proof, see \cite{LW,KF}).
Indeed, when $s=1$, by joining \eqref{cheeg} and \eqref{hardy-cheeger2}, we now get 
\begin{equation*} \lambda_{p}(\Omega)\geq  \max \left\{\left ( \frac{p-N}{N} \right)^p  , 1\right\}\, \left(\frac{h_1(\Omega)}{p}\right)^p,
\end{equation*}
which holds for every open set $\Omega\subseteq\mathbb{R}^N$.
The main interest of this results is that this is stable as $p$ goes $\infty$, i.e. we have
\[
\lim_{p\nearrow \infty} \left(\max \left\{\left ( \frac{p-N}{N} \right)^p  , 1\right\}\, \left(\frac{h_1(\Omega)}{p}\right)^p\right)^\frac{1}{p}=\frac{h_1(\Omega)}{N},
\] 
while the right-hand side of \eqref{cheeg} raised to the power $1/p$  converges to $0$. 
\end{remark}
By combining the estimate \eqref{hardy-inradius} with the asymptotic behaviour of $(\mathfrak{h}_{s,p}(\Omega))^{1/p}$ as $p\to \infty$, 
 we get the next result, which clarifies the interest of Remark \ref{cheegerimpr}.
\begin{proposition}\label{convergenzaautov} Let $0<s\leq 1$ and let $\Omega\subsetneq \mathbb{R}^N$  be an open set. 
		Then\footnote{It is intended that $1/r_\Omega^s=0$, in the case $r_\Omega=+\infty$.} 
		\begin{equation}
		\label{peigenlimit}
		\lim_{p\to \infty}(\lambda_{s,p}(\Omega))^\frac{1}{p}=\frac 1 {r_{\Omega}^s}=\lambda_{s,\infty}(\Omega),
 \end{equation}
where $\lambda_{s,\infty}(\Omega)$ is defined through the following  minimization problem  
\[
\lambda_{s,\infty}(\Omega)=
\inf_{u\in C^{0,s}(\overline\Omega)}\left\{[u]_{C^{0,s}(\overline\Omega)} :\,  \|u\|_{L^\infty(\Omega)}=1,\ u=0\ \text{on}\ \partial\Omega\right\}.
\]
Moreover, when $r_\Omega<+\infty$, a minimizer of the last problem is given by
\[
U=\left(\frac{d_\Omega}{r_\Omega}\right)^s.
\]
\end{proposition}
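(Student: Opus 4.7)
The plan is to establish the limit \eqref{peigenlimit} by proving matching $\liminf$ and $\limsup$ inequalities, and then to verify the characterization of $\lambda_{s,\infty}(\Omega)$ together with its minimizer.

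For the lower bound, I would combine the Hardy--inradius inequality \eqref{hardy-inradius}, valid for $p$ large enough that $sp > N$ (and available for $s=1$ via the classical \eqref{s=1}), with the asymptotics of $(\mathfrak h_{s,p})^{1/p}$. More precisely, \eqref{hardy-inradius} yields
\[
(\lambda_{s,p}(\Omega))^{1/p} \geq \frac{(\mathfrak{h}_{s,p})^{1/p}}{r_\Omega^{s}},
\]
and Theorem \ref{pinfty} applied to $\mathbb R^N \setminus \{0\}$ gives $(\mathfrak{h}_{s,p})^{1/p} \to 1$ as $p \to \infty$. Hence $\liminf_{p\to\infty}(\lambda_{s,p}(\Omega))^{1/p} \geq 1/r_\Omega^s$, which is trivially true when $r_\Omega = +\infty$ under the stated convention.

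For the matching upper bound, I would reuse the trial function $\varphi_\varepsilon$ from \eqref{trial}, exactly as in the proof of Theorem \ref{pinfty}. Choose $x_0 \in \Omega$ with $d_\Omega(x_0)$ arbitrarily close to $r_\Omega$ (or arbitrarily large, when $r_\Omega = +\infty$), and fix $r < d_\Omega(x_0)$ together with $\varepsilon \in (0,1)$. Since $\varphi_\varepsilon$ lies in $\widetilde W^{s,p}_0(B_r(x_0)) \subseteq \widetilde W^{s,p}_0(\Omega)$, the variational definition \eqref{speigenv} gives
\[
(\lambda_{s,p}(\Omega))^{1/p} \leq \frac{[\varphi_\varepsilon]_{W^{s,p}(\mathbb R^N)}}{\|\varphi_\varepsilon\|_{L^p(\Omega)}}.
\]
Passing to the limit by means of \cite[Lemma 2.4]{BPSk}, together with the H\"older seminorm bound $[\varphi_\varepsilon]_{C^{0,s}(\mathbb R^N)} \leq 1$ from \eqref{sholder} and the value $\|\varphi_\varepsilon\|_{L^\infty(\Omega)} = (r+\varepsilon)^s - \varepsilon^s$, yields $\limsup_{p\to\infty}(\lambda_{s,p}(\Omega))^{1/p} \leq 1/\bigl((r+\varepsilon)^s - \varepsilon^s\bigr)$. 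Letting $\varepsilon \to 0^+$ and then $r \to r_\Omega^-$ (or $r \to +\infty$ if $r_\Omega = +\infty$) closes the proof of \eqref{peigenlimit}.

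It remains to identify $\lambda_{s,\infty}(\Omega) = 1/r_\Omega^s$ and show that $U = (d_\Omega/r_\Omega)^s$ is a minimizer when $r_\Omega < +\infty$. The function $U$ is admissible: it vanishes on $\partial\Omega$ and satisfies $\|U\|_{L^\infty(\Omega)} = 1$ by definition of the inradius; moreover, since $d_\Omega$ is $1$-Lipschitz, the elementary inequality $|a^s - b^s| \leq |a-b|^s$ for $a,b \geq 0$ yields $|d_\Omega(x)^s - d_\Omega(y)^s| \leq |x-y|^s$, so $[U]_{C^{0,s}(\overline\Omega)} \leq 1/r_\Omega^s$, and hence $\lambda_{s,\infty}(\Omega) \leq 1/r_\Omega^s$. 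Conversely, any admissible competitor $u$ attains values arbitrarily close to $1$ at points $x^* \in \Omega$; the nearest boundary point $y^* \in \partial\Omega$ satisfies $|x^* - y^*| = d_\Omega(x^*) \leq r_\Omega$, and $u(y^*) = 0$, so
\[
[u]_{C^{0,s}(\overline\Omega)} \geq \frac{|u(x^*) - u(y^*)|}{|x^* - y^*|^s} \geq \frac{|u(x^*)|}{r_\Omega^s}.
\]
Taking the supremum over such $x^*$ produces $\lambda_{s,\infty}(\Omega) \geq 1/r_\Omega^s$, so equality holds and $U$ is indeed a minimizer. The main technical ingredient is the passage to the limit in the Gagliardo seminorm via \cite[Lemma 2.4]{BPSk}; everything else is direct, paralleling the bookkeeping already performed in the proof of Theorem \ref{pinfty}.
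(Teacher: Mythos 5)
Your proposal follows the same strategy as the paper: the lower bound for the liminf comes from \eqref{hardy-inradius} combined with Theorem \ref{pinfty}, the upper bound for the limsup uses the trial functions $\varphi_\varepsilon$ from \eqref{trial} together with \cite[Lemma 2.4]{BPSk} and the estimates \eqref{sholder}, and the identification of $\lambda_{s,\infty}(\Omega)$ proceeds by exactly the same two-sided argument with the explicit competitor $U=(d_\Omega/r_\Omega)^s$. The only structural difference is that you prove the liminf inequality first, while the paper proves the limsup first in order to dispose of the $r_\Omega=+\infty$ case for the limit immediately; this is inconsequential.

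There is, however, one genuine gap. You establish $\lambda_{s,\infty}(\Omega)=1/r_\Omega^s$ only when $r_\Omega<+\infty$. When $r_\Omega=+\infty$ the statement requires $\lambda_{s,\infty}(\Omega)=0$, and your lower bound argument only gives the trivial $\lambda_{s,\infty}(\Omega)\ge 0$; you provide no admissible sequence driving the infimum to $0$, since the competitor $U$ is undefined in this case. The paper fills this by using the truncations
\[
\varphi_M=\frac{\min\{d_\Omega^s,\,M^s\}}{M^s},
\]
which are admissible for $\lambda_{s,\infty}(\Omega)$ and satisfy $[\varphi_M]_{C^{0,s}(\overline\Omega)}\le 1/M^s\to 0$ as $M\to\infty$. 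You should add this (or an equivalent) construction to complete the chain of equalities in \eqref{peigenlimit}. A minor presentational point: \cite[Lemma 2.4]{BPSk} is applied in the paper only for $0<s<1$, with the case $s=1$ treated directly via $\|\nabla\varphi_\varepsilon\|_{L^\infty(\mathbb{R}^N)}=[\varphi_\varepsilon]_{C^{0,1}(\mathbb{R}^N)}$; your formulation glosses over this distinction, though it does not affect the correctness of the argument.
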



 \begin{proof}
 First of all,  we prove that 
			 \begin{equation}
			 \label{2dis}
			 \limsup_{p\to\infty} \Big(\lambda_{s,p}(\Omega)\Big)^\frac{1}{p}\leq \frac{1}{r_\Omega^s}.
			 \end{equation}
We take $r<r_\Omega$, thus there exists $x_0\in\Omega$ such that $B_r(x_0)\subseteq \Omega$.			 
For every $\varepsilon>0$, we take the same function $\varphi_\varepsilon$ defined in \eqref{trial}.
This implies that
\[
\Big(\lambda_{s,p}(\Omega)\Big)^\frac{1}{p}\le 
\frac{[\varphi_\varepsilon]_{W^{s,p}(\mathbb{R}^N)}}{\displaystyle\left\|  \varphi_\varepsilon \right\|_{L^p(\Omega)}}.
\]
By sending $p$ to $\infty$ (and by using  \cite[Lemma 2.4]{BPSk} when $0<s<1$),  we obtain that 
\[
\begin{split}
\limsup_{p\to\infty} \Big(\lambda_{s,p}(\Omega)\Big)^\frac{1}{p}& \leq \limsup_{p\to\infty} \frac{[\varphi_\varepsilon]_{W^{s,p}(\mathbb R^N)}}{\|  \varphi_\varepsilon \|_{L^p(\Omega)}}
=\begin{cases}\displaystyle \frac{[\varphi_\varepsilon ]_{C^{0,s}(\mathbb R^N)}}{\|  \varphi_\varepsilon \|_{L^\infty(\Omega)}}, \quad \hbox{ if } 0<s<1,\\ 
\\
\displaystyle \frac{\|\nabla \varphi_\varepsilon\|_{L^{\infty}(\mathbb{R}^N)}}{\|  \varphi_\varepsilon \|_{L^\infty(\Omega)}},  \quad \hbox{ if } s=1.
\end{cases}
\end{split}
\]
By observing that 
\[
\|\nabla \varphi_\varepsilon\|_{L^{\infty}(\mathbb{R}^N)}=[\varphi_\varepsilon]_{C^{0,1}(\mathbb{R}^N)},
\]
we thus obtain
\begin{equation}
\label{ap}
\limsup_{p\to\infty} \Big(\lambda_{s,p}(\Omega)\Big)^\frac{1}{p}\le \frac{[\varphi_\varepsilon]_{C^{0,s}(\mathbb R^N)}}{\|  \varphi_\varepsilon \|_{L^\infty(\Omega)}},\qquad \text{for}\ 0<s\le 1.
\end{equation}
We now observe that
\[
\|\varphi_\varepsilon\|_{L^\infty(\Omega)}=(r+\varepsilon)^s-\varepsilon^s.
\]
By recalling also \eqref{sholder}, from \eqref{ap} we thus obtain for every $\varepsilon>0$
\[
\limsup_{p\to\infty} \Big(\lambda_{s,p}(\Omega)\Big)^\frac{1}{p}\le \frac{1}{(r+\varepsilon)^s-\varepsilon^s},\qquad \text{for}\ 0<s\le 1.
\]
By taking the limit as $\varepsilon$ goes to $0$ and using the arbitrariness of $r<r_\Omega$, we get \eqref{2dis}.
In particular, if $r_{\Omega}=+\infty$, \eqref{2dis} implies that  
 \[\lim_{p\to \infty}(\lambda_{s,p}(\Omega))^\frac{1}{p}=0=\frac 1 {r_{\Omega}^s}.\]
 In the case when $r_{\Omega}<\infty$, for every $0<s\leq 1$, we can  use  \eqref{hardy-inradius} and \eqref{hardyinfty}, to obtain that
			 \begin{equation*}
			 \liminf_{p\to \infty}(\lambda_{s,p}(\Omega))^\frac{1}{p}\geq \liminf_{p\to \infty} \frac{(\mathfrak{h}_{s,p}(\Omega))^\frac{1}{p}}{r^{s}_{\Omega}}= \frac 1 {r_{\Omega}^s}.			 \end{equation*}	 
	\vskip.2cm\noindent
We now prove that 
\begin{equation}
\label{problemasup}
\frac{1}{r_{\Omega}^s}=\lambda_{s,\infty}(\Omega).
\end{equation}
Let $\varphi\in C^{0,s}(\overline\Omega)$ be admissible for the problem defining $\lambda_{s,\infty}(\Omega)$. For every $x\in\Omega$, we take $y_x\in\partial\Omega$ such that $d_\Omega(x)=|x-y_x|$. Thus, we have
\[
|\varphi(x)|=|\varphi(x)-\varphi(y_x)|\le |x-y_x|^s\,[\varphi]_{C^{0,s}(\overline\Omega)}=d_\Omega(x)^s\,[\varphi]_{C^{0,s}(\overline\Omega)}\le r_\Omega^s\,[\varphi]_{C^{0,s}(\overline\Omega)}.
\]
This shows that 
\[
\frac{1}{r_\Omega^s}\le [\varphi]_{C^{0,s}(\overline\Omega)},
\]
thanks to the normalization on $\varphi$. It is intended that the left-hand side is zero, in the case $r_\Omega=+\infty$.
The previous inequality in turn implies that
\[
\lambda_{s,\infty}(\Omega)\ge \frac{1}{r_\Omega^s}.
\]
Finally, if $r_\Omega<+\infty$ we take the function 
\[
\varphi=\frac{d_\Omega^s}{r_\Omega^s},
\]
which is admissible for $\lambda_{s,\infty}(\Omega)$. This gives
\[
\lambda_{s,\infty}(\Omega)=\frac{1}{r_\Omega^s}\,[d_\Omega^s]_{C^{0,s}(\overline\Omega)}\le \frac{1}{r_\Omega^s},
\]
thanks to the fact that $d_\Omega^s$ is $s-$H\"older continuous, with H\"older constant less than or equal to $1$. This shows \eqref{problemasup} and that $ {r_{\Omega}^{-s}} d_\Omega^s  $  is a minimizer for the problem defining $\lambda_{s,\infty}(\Omega)$.
\par
In the case $r_\Omega=+\infty$, it is sufficient to take $M>0$ and use the test function 
\[
\varphi_M=\frac{\min\{d_\Omega^s,M^s\}}{M^s}.
\]
This would give 
\[
\lambda_{s,\infty}(\Omega)\le \lim_{M\to\infty}\frac{1}{M^s}\,\big[\min\{d_\Omega^s,M^s\}\big]_{C^{0,s}(\overline\Omega)}=\lim_{M\to\infty}\frac{1}{M^s}\,[d_\Omega^s]_{C^{0,s}(\overline\Omega)}\le \lim_{M\to\infty} \frac{1}{M^s}=0,
\]
thus proving  \eqref{problemasup} in the case $r_\Omega=+\infty$, as well.
\end{proof}
\begin{remark}
We recall that, when  $s=1$,  the limit  
\[\lim_{p\to \infty}(\lambda_{p}(\Omega))^{1/p} =\frac 1 {r_{\Omega}}\]
 has been shown  in \cite[Theorem 3.1]{FIN} and \cite[Lemma 1.2]{JLM}, when $\Omega\subseteq\mathbb{R}^N$ is a bounded open set. 
 Later this result has been extended to every open  set in  \cite[Corollary 6.1]{BPZ}. 
\end{remark}


\begin{remark} 
Thanks to the previous result, we can observe that the lower bound \eqref{hardy-cheeger2} becomes sharp in the limit, as $p$ goes to $\infty$. Indeed, for every $0<s\leq 1$ and for every open set $\Omega\subseteq\mathbb R^N$, by combining \eqref{peigenlimit} and \eqref{inr-chee}, we get the following inequality  
\begin{equation}
\label{cheegerbelow}
 \lambda_{s,\infty}(\Omega)= \lim_{p\to \infty}(\lambda_{s,p}(\Omega))^\frac{1}{p} 
 \geq \left(\frac{h_1(\Omega)}{N}\right)^s.
\end{equation}
Such an estimate {\it is sharp}, since it becomes an identity when $\Omega=B_R(x_0)$, thanks to the fact that 
$$
\lambda_{s,\infty}(B_R(x_0))=\frac{1}{R^s}=\left(\frac{h_1(B_R(x_0))}{N}\right)^s.
$$

%
 \end{remark}

\end{document}